\documentclass[0.2pt,reqno]{amsart}
\usepackage[utf8]{inputenc}
\usepackage{mathrsfs}
\usepackage{amsfonts}
\usepackage{amsmath,amssymb,latexsym,esint,mathrsfs}
\usepackage{verbatim,cite}
\usepackage[left=2.1cm,right=2.1cm,top=1.3cm,bottom=1.3cm]{geometry}

\usepackage{color,enumitem,graphicx}
\usepackage[colorlinks=true,urlcolor=blue, citecolor=red,linkcolor=blue,
linktocpage,pdfpagelabels,
bookmarksnumbered,bookmarksopen]{hyperref}

\usepackage[hyperpageref]{backref}
\usepackage[english]{babel}

\newtheorem{theorem}{Theorem}[section]
\newtheorem{lemma}{Lemma}[section]
\newtheorem{corollary}{Corollary}[section]
\newtheorem{proposition}{Proposition}[section]

\newtheorem{remark}{Remark}[section]
\newtheorem{definition}{Definition}[section]
\numberwithin{equation}{section} %%
\makeindex

\newcommand{\s}{\section}

\newcommand{\R}{\mathbb R}

\renewcommand{\div}{{\rm div}}

\newcommand{\wF}{\widetilde F}

\newcommand{\e}{\varepsilon}

\newcommand{\bt}{\begin{theorem}}
\newcommand{\et}{\end{theorem}}
\newcommand{\bl}{\begin{lemma}}
\newcommand{\el}{\end{lemma}}
\newcommand{\bd}{\begin{definition}}
\newcommand{\ed}{\end{definition}}
\newcommand{\bc}{\begin{corollary}}
\newcommand{\ec}{\end{corollary}}
\newcommand{\bp}{\begin{proof}}
\newcommand{\ep}{\end{proof}}
\newcommand{\bx}{\begin{example}}
\newcommand{\ex}{\end{example}}
\newcommand{\bi}{\begin{exercise}}
\newcommand{\ei}{\end{exercise}}
\newcommand{\bo}{\begin{proposition}}
\newcommand{\eo}{\end{proposition}}
\newcommand{\br}{\begin{remark}}
\newcommand{\er}{\end{remark}}
\newcommand{\be}{\begin{equation}}
\newcommand{\ee}{\end{equation}}
\newcommand{\ba}{\begin{align}}
\newcommand{\ea}{\end{align}}
\newcommand{\bn}{\begin{enumerate}}
\newcommand{\en}{\end{enumerate}}
\newcommand{\bg}{\begin{align*}}
\newcommand{\bcs}{\begin{cases}}
\newcommand{\ecs}{\end{cases}}

\newcommand{\HR}{{\mathcal H}}

\newcommand{\SR}{{\mathbb S}}

\newcommand{\WR}{{\mathcal W}}

\newcommand{\bean}{\begin{eqnarray*}}
\newcommand{\eean}{\end{eqnarray*}}

\begin{document}

\title[Sharp capillary Sobolev Inequality and Moser-Trudinger Inequality Outside Convex Domain]{ Sharp capillary Sobolev Inequality and Moser-Trudinger Inequality Outside Convex Domain}

%\author[Lu Chen, Jiali Lan ]{Lu Chen, Jiali Lan }

\author{Lu Chen}
\address[Lu Chen]{Key Laboratory of Algebraic Lie Theory and Analysis of Ministry of Education, School of Mathematics and Statistics, Beijing Institute of Technology, Beijing
100081, PR China; Tangshan Research Institute, Beijing Institute of Technology, Tangshan 063000, PR China} 
\email{chenlu5818804@163.com}

\author{Jiali Lan}
\address[Jiali Lan]{Key Laboratory of Algebraic Lie Theory and Analysis of Ministry of Education, School of Mathematics and Statistics, Beijing Institute of Technology, Beijing
100081, PR China}
\email{17636268505@163.com}

%\date{\today}
%%    \thanks will become a 1st page (unnumbered) footnote.
%%    Multiple \thanks are possible. You may use it for the
%%    indication of the corresponding author.

%\thanks{X.\ He is  supported by NSFC (11771468, 11271386)}

%\subjclass[2000]{35J62, 35J50, 35B65}
%\date{\today}

\keywords{Capillary Schwartz symmetrization; Sobolev inequality; Moser-Trudinger  inequality;  Anisotropic gauge.} 
\thanks{The first author was partly supported by the  National Natural Science Foundation of China (No. 12271027) and Hebei Natural Science Foundation (No. A2025105003).}

\begin{abstract}
The theory of sharp geometric inequality in $\mathbb{R}^n$ and inside convex cone has been well-developed, much less known for sharp capillary geometric inequality outside convex domain. Recently, Fusco-Julin-Morini-Pratelli \cite{FJMP} obtained sharp capillary isoperimetric inequality and make it possible to obtain the sharp capillary geometric inequality outside convex domain. In this paper, we establish the sharp capillary Sobolev inequality and Moser-Trudinger inequality outside convex domain, which can be seen as geometric inequality on the Finsler manifold to some extent. Our method is based on constructing capillary P\'{a}lya-Szeg\"{o} rearrangement inequality outside convex domain.
Finally, we also consider the capillary Talenti-Comparison principle and Bossel-Daners inequality.

\end{abstract}

\maketitle

%\begin{center}
%\begin{minipage}{9.5cm}
%\small
%\tableofcontents
%\end{minipage}
%\end{center}

\vspace{-1cm}
 \section{Introduction and Main Results}

Given $n \geq 3$ and $1<p<n$, the classical Sobolev inequality \cite{S} in $\mathbb{R}^n$ states that for any $u\in W^{1,p}(\mathbb{R}^n)$, there holds
\begin{equation}\label{1.1}
\int_{\mathbb{R}^n}|u|^{p^{*}}dx \leq C(n, p, s)\int_{\mathbb{R}^n}|\nabla u |^{p}dx,
\end{equation}
where $p^{*} = \frac{np}{n-p}$ denotes the critical Sobolev exponent. Aubin \cite{A2} and Talenti \cite{T1} applied the technique of symmetry and rearrangement combining the Bliss Lemma to show that all radial extremals of Sobolev inequality must take the form as
\begin{equation*}
U=\big(1+|x|^{\frac{p}{p-1}}\big)^{-\frac{n-p}{p}},
\end{equation*}
up to some dilation and translation. However, they didn't classify  all extremals of Sobolev inequality. Later, Erausquin, Nazaret and Villani \cite{ENV} showed that all extremals must take the form as
\begin{equation*}
U=\big(1+|x|^{\frac{p}{p-1}}\big)^{-\frac{n-p}{p}},
\end{equation*}
up to some dilation and translation by the optimal transportation method. The case
$p = 1$, which was obtained first by Federer-Fleming \cite{FF} and Fleming-Rishel \cite{FR}, is equivalent to the classical isoperimetric inequality on $\mathbb{R}^n$:
\[nv_n^{1/n}|\Omega|^{(n-1)/n}\leq\mathrm{Per}(\Omega),\] where $v_n$ stands for the volume of the unit ball in $\mathbb{R}^n$.

\medskip
 The Sobolev inequality in convex cones has been firstly established in \cite{LPT} and it has been generalized to the anisotropic setting in \cite{CRS}:
 \begin{equation}\label{e0.2}\|u\|_{L^{p^*}(\Sigma)}\leq S_{\Sigma,F}\|F(\nabla u)\|_{L^p(\Sigma)},\end{equation}
 where $F$ is a positive, one-homogenous, convex function (see Section 2 for precise definition ) and $\Sigma$ is a convex open cone in $\mathbb{R}^n$ given by
 \[\Sigma=\{tx:\ x\in w,\ t\in(0,+\infty)\}\]
 for some open domain $w\subset\mathbb{S}^{n-1}$.  The extremals of \eqref{e0.2} are classified in \cite{CFR} and take the form as
$$u=c\sigma^{\frac{n-p}{p}}(1+(\sigma F^o(x-x_0))^{\frac{p}{p-1}})^{-\frac{n-p}{p}},\ c\in \mathbb{R},\ \sigma>0.\ \ $$
 Moreover, if $\Sigma=\mathbb{R}^n$,  then $x_0$ may be any point of $\mathbb{R}^n$; if $\Sigma=\mathbb{R}^k\times\mathcal{C}$ with $k\in\{1,\cdots,n-1\}$ and $\mathcal{C}$ does not contain a line, then $x_0\in\mathbb{R}^k\times\{0\}$; otherwise, $x_0$ is the origin.

\medskip
The classical Sobolev embedding theorem tells us that the Sobolev space $W_0^{1,p}(\Omega)$ is imbedded into $L^q(\Omega)$ for any $1\leq q<\frac{np}{n-p}$. However, when $p=n$, it is known that $W^{1,n}\left( \Omega\right)  \hookrightarrow L^{\infty}\left(
\Omega\right)  $ fails. It it known that the analogue of optimal Sobolev embedding for $W_{0}^{1,n}\left(  \Omega\right)$ (the Sobolev space consisting of functions  vanishing on the boundary $\partial\Omega$) is given by
the famous Trudinger-Moser inequality \cite{T,M2}: \begin{equation}
\mathop {\sup }\limits_{u \in W_0^{1,n}(\Omega),\ \|\nabla u\|_n\leq1} \int_\Omega e^{\alpha_n u^{\frac{n}{n-1}}}dx< +\infty,
\label{TM1}
\end{equation}
where $\alpha_n=n \omega_{n-1}^{\frac{1}{n-1}}$ refers to the sharp constant and $\omega_{n-1}$ denotes the $n-1$ dimensional measure of unit sphere in $\mathbb{R}^n$. The existence of extremals for Moser-Trudinger inequalities was first established by Carleson and Chang \cite{CC} on the unit ball, extended by Flucher \cite{F2} to any bounded domain in dimension $n=2$, and later by Lin \cite{L} to all bounded domains in $\mathbb{R}^n$.

\medskip
To the best of our knowledge, capillary Sobolev and Moser-Trudinger inequalities outside any convex domain have not been studied in the literature. In this paper, we will solve this problem. The key tool in our analysis is the capillary isoperimetric inequality outside any convex domain $E$, established by Fusco et al. in \cite{FJMP}. To state our main result conveniently, we introduce a special capillary gauge $\wF_{\lambda}:\R^n\rightarrow\R$:
 \be\label{e3.2}
 \wF_{\lambda}(\xi)=|\xi|+\nabla h\cdot\xi,
 \ee
where $h$ is a function satisfying the following boundary value problem:
 \be\begin{cases}\label{e3.1}
 -\Delta h=0&\mbox{in\ }  \Omega\subset E^c\\
 \frac{\partial h}{\partial\nu}=\lambda&\mbox{on\ } \partial\Omega\cap\partial E^c
 \end{cases}\ee
 for any $\lambda\in(-1,1)$. In the setting of capillary gauge $\wF_\lambda$, the capillary energy \be\label{e1.15}P_{\lambda}(\Omega;E^c):=\HR^{n-1}(\partial^*\Omega\cap E^c)-\lambda\HR^{n-1}(\partial^*\Omega\cap\partial E) \ee
could be written as anisotropic perimeter functional outside $E$ with respect to $\wF_\lambda$,  where $\partial^*\Omega$ denotes the reduced boundary of $\Omega$. In the rest of this paper, we always assume that $\Omega$  is a bounded domain outside a convex domain $E$ and define
\[W_0^{1,p}(\Omega; E^c):=\{u\in W^{1,p}(\Omega):\ u=0\ \mbox{on}\ \partial\Omega\cap E^c\}\] for $1\leq p<\infty$. We first establish the following capillary P\'{a}lya-Szeg\"{o} rearrangement inequality outside convex domain, which extending the classical Schwartz rearrangement inequality in bounded domain of $\mathbb{R}^n$.

\bt\label{theorem4.1}
(Capillary P\'{o}lya-Szeg\"{o} principle outside convex domain)
 For $1\leq p<\infty$, let $u\in W_0^{1,p}(\Omega;E^c)$ be a non-negative function satisfying the following anisotropic Neumann boundary condition:
\begin{equation}\label{e4.19a}D\wF_{\lambda}(\nabla u)\cdot\nu=0 \mbox{\ on\ } \partial\Omega\cap\partial E^c.\end{equation}
Then,  the following inequality holds:
\be\label{e4.1}
\int_{\Omega}\wF_{\lambda}^p(\nabla u)dx\geq \int_{B_{r}^+(-r\lambda e_n)}F_{\lambda}^p(\nabla u^*)dx,
\ee
where $F_{\lambda}(\xi)=|\xi|-\lambda\left<\xi,e_n\right>$ with $e_n$ being the   n-th coordinate unit vector, and $u^*$  is the capillary Schwartz symmetrization of  $u$ (See Section 2 for definition).
\et

%Let $1\leq p<\infty$, the following inequality known as Sobolev's inequality: assume that $u$ is a function in the Sobolev space $W^{1,p}(\R^n)$, then there exists a constant $C>0$ such that
%\be\label{e1.12}
%\|\nabla u\|_{L^p(\R^n)}\geq C \|u\|_{L^{p^{*}}(\R^n)}.
%\ee
%The best  constant in \eqref{e1.12} is given by
%\be\label{e1.11}
%S=\inf_{u\in W^{1,p}(\R^n)}\frac{\|\nabla u\|_{L^p(\R^n)}}{\|u\|_{L^{p^{*}}(\R^n)}},
%\ee
%where $p^{*}=\frac{np}{n-p}$ is the critical exponent. This constant is also called Sobolev constant and it plays a crucial role in several semilinear elliptic partial differential equations where the right-hand side has a critical growth (see \cite{BN}). It is well-known that the classical Sobolev inequality follows directly from the classical P\'{o}lya-Szeg\"{o} principle. Similarly, the capillary Schwarz symmetrization yields an analogous Sobolev-type inequality outside any convex cylinders.

The symmetrization preserves the $L^{q}$ norm of $u$ for any $q\geq 1$, while the capillary P\'{o}lya-Szeg\"{o} inequality outside convex domain ensures that the $L^p$ norm of the gradient does not increase under the assumption of Neumann boundary condition \eqref{e4.19a}. Consequently, the sharp capillary Sobolev inequality in $W_0^{1,p}(\Omega; E^c)$ with Neuman boundary condition reduces to the sharp capillary Sobolev inequality on the half space which has been established in \cite{LXZ}. However, we still expect that the sharp capillary Sobolev inequality in $W_0^{1,p}(\Omega; E^c)$ still holds without any extra Neuman boundary condition. This will be achieved by using subcritical approximation method. Specifically, we prove that:

\bt\label{theorem4.4}(The sharp Sobolev-type inequality outside convex domain)
 For $1<p<n$, let $u\in W_0^{1,p}(\Omega;E^c)$ be a non-negative function. Then, for any $\lambda\in(-1,1)$, the following Sobolev-type inequality holds:
\be\label{e5.11}\int_{\Omega}\wF_{\lambda}^p(\nabla u) dx\geq C(\lambda,p)^{-p}\left(\int_{\Omega}|u|^{\frac{np}{n-p}}dx\right)^{\frac{n-p}{n}},\ee
where  $C(\lambda,p)$ is the best anisotropic Sobolev constant in the half-space established in \cite{CFR}.
\et

Furthermore, we also establish the following capillary Moser-Trudinger inequality outside convex domain.

\begin{theorem}\label{theorem1.6}(Moser-Trudinger inequality outside convex domain)
Let $\Omega$ be a bounded domain outside a convex domain $E$, and let $u\in W_0^{1,n}(\Omega;E^c)$ be a non-negative function such that
 \[\int_{\Omega}|\wF_{\lambda}(\nabla u)|^ndx\leq 1.\]
Then, it follows that
\[\int_{\Omega}\exp\left(\widetilde{\lambda}_n|u|^{\frac{n}{n-1}}\right)dx\leq C(n),\]
where $\widetilde{\lambda}_n=n(2n\widetilde{\kappa}_n)^{\frac{1}{n-1}}$ and $\widetilde{\kappa}_n=|\{x\in\R^n:F_{\lambda}^o(x)\leq 1\}|$.
\end{theorem}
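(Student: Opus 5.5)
The plan is to reduce to a one-dimensional Moser-type inequality on the capillary half-ball by symmetrization, mirroring Theorem~\ref{theorem4.4}. The catch is that Theorem~\ref{theorem4.1} needs the Neumann condition \eqref{e4.19a}. I would avoid it by working with level sets directly (Talenti's argument), using only the capillary isoperimetric inequality of \cite{FJMP}.

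\textbf{Step 1.} Let $\mu(t)=|\{u>t\}\cap\Omega|$. Since $u=0$ on $\partial\Omega\cap E^c$, for a.e.\ $t>0$ the set $\Omega_t=\{u>t\}$ is admissible outside $E$. Its free boundary $\{u=t\}\cap E^c$ has capillary energy $P_\lambda(\Omega_t;E^c)$. By the \cite{FJMP} inequality, $P_\lambda(\Omega_t;E^c)$ is at least the energy of the capillary half-ball $B_r^+(-r\lambda e_n)$ of the same volume. Because $\wF_\lambda$ represents $P_\lambda$ as an anisotropic perimeter, this gives
\[
\int_{\{u=t\}}\frac{\wF_\lambda(\nabla u)}{|\nabla u|}\,d\HR^{n-1}\;\ge\; n\,\widetilde\kappa_n^{1/n}\,\mu(t)^{\frac{n-1}{n}}\cdot c_0 ,
\]
where $c_0$ is the half-space normalization. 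I expect the isoperimetric constant to be $n(\widetilde\kappa_n/2)^{1/n}$ up to the paper's normalization of $\widetilde\kappa_n$. The precise constant has to be tracked so that it produces $\widetilde\lambda_n=n(2n\widetilde\kappa_n)^{1/(n-1)}$.

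The identification of the boundary contribution is the main obstacle. Writing $\int_{\{u=t\}\cap E^c}\nabla h\cdot\nu$ as $-\lambda\HR^{n-1}(\partial\Omega_t\cap\partial E)$ uses harmonicity of $h$ and $\partial_\nu h=\lambda$ via the divergence theorem on $\Omega_t$. To avoid \eqref{e4.19a}, I would apply this only to sets, not to $u$, which is exactly what the level-set argument allows.

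\textbf{Step 2.} By the coarea formula and Hölder's inequality, exactly as in Talenti's proof,
\[
-\frac{d}{dt}\int_{\{u>t\}}\wF_\lambda^n(\nabla u)\,dx\;\ge\;\frac{\big(\int_{\{u=t\}}\wF_\lambda(\nabla u)/|\nabla u|\big)^{n}}{(-\mu'(t))^{n-1}}.
\]
Define $v(s)=u^\sharp$, the decreasing rearrangement in the volume variable $s\in(0,|\Omega|)$. Then $\int_\Omega \wF_\lambda^n(\nabla u)\le 1$ yields
\[
\int_0^{|\Omega|}\big(c_1 s^{\frac{n-1}{n}}|v'(s)|\big)^n ds\le 1 ,
\]
with $c_1$ the isoperimetric constant from Step 1. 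Equivalently, this is the $F_\lambda$-Dirichlet energy of the radial function $u^*$ on $B_r^+(-r\lambda e_n)$, with $|B_r^+|=|\Omega|$.

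\textbf{Step 3.} Substitute $s=|\Omega|e^{-\tau}$ and $w(\tau)=\alpha^{(n-1)/n}v(s)$ with $\alpha=\widetilde\lambda_n$ chosen to normalize the energy. Then
\[
\int_\Omega e^{\widetilde\lambda_n u^{n/(n-1)}}dx=|\Omega|\int_0^\infty e^{w^{n/(n-1)}-\tau}d\tau
\qquad\text{with}\qquad \int_0^\infty|w'|^nd\tau\le1,\quad w(0)=0 .
\]
Moser's one-dimensional lemma bounds the right-hand side by $C(n)|\Omega|$, and this matches the $C(n)$ of the statement after normalizing $|\Omega|$ (or absorbing $|\Omega|$ into the constant as in \cite{M2}).

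Finally I would check the constant. For the half-ball with the $F_\lambda$ energy, the relevant "sphere measure" is $n|\{F^o_\lambda\le1\}\cap\{x_n>0\}|$-type. This yields $\alpha=n(n\cdot 2\widetilde\kappa_n)^{1/(n-1)}$ under the paper's convention. I would verify the convention against the half-space Sobolev constant from \cite{CFR}.
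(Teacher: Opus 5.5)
Your route is valid in structure and differs from the paper's. The paper proves the bound only under the Neumann condition (Proposition~\ref{theorem1.5}, via Theorem~\ref{theorem4.1}) and then removes that condition by subcritical approximation: maximizers $u_\varepsilon$ of $I_\varepsilon$ satisfy the natural boundary condition, and one lets $\varepsilon\to0$. You correctly see that Talenti's level-set argument never needs the Neumann condition. The coarea formula, H\"older, $\int_{\{u=t\}}|\nabla u|^{-1}\,d\HR^{n-1}\le-\mu'(t)$, the divergence theorem for $h$ on $\{u>t\}$ and the \cite{FJMP} inequality for sets all apply to any nonnegative $u\in W_0^{1,n}(\Omega;E^c)$, so that detour can be dropped.

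One correction in Step~1. $\wF_\lambda$ is not even, and $\nabla u/|\nabla u|$ is the \emph{inner} normal of $\{u>t\}$. Hence $\int_{\{u=t\}}\wF_\lambda(\nabla u)/|\nabla u|\,d\HR^{n-1}=P(\{u>t\};E^c)+\lambda\HR^{n-1}(\partial^*\{u>t\}\cap\partial E)$, which is the capillary energy for $-\lambda$. You must flip a sign convention, either by using $\wF_\lambda(-\nabla u)$ or by replacing $h$ with $-h$.

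The genuine gap is the constant, and it cannot be closed. Your Steps 2--3 give exactly $\alpha=c_1^{n/(n-1)}$, where $c_1$ is the capillary isoperimetric constant. Applying the divergence theorem to the field $x$ on the half Wulff ball gives $c_1=n\kappa^{1/n}$, with $\kappa=|B_1(\mp\lambda e_n)\cap\R^n_+|$ (sign according to the convention just fixed). Hence $\alpha=n(n\kappa)^{1/(n-1)}$.

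Even your own guess $c_1=n(\widetilde\kappa_n/2)^{1/n}$ would give $n(n\widetilde\kappa_n/2)^{1/(n-1)}$, not $n(2n\widetilde\kappa_n)^{1/(n-1)}$. That is the constant in Proposition~\ref{theorem1.5}, and it matches $n(n\kappa)^{1/(n-1)}$ only when $\lambda=0$. The phrase ``under the paper's convention'' hides a factor $4^{1/(n-1)}$.

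No convention rescues the stated constant. $\widetilde\kappa_n=|B_1(-\lambda e_n)|=|B_1|$ is unambiguous, and $n(2n\widetilde\kappa_n)^{1/(n-1)}$ exceeds even the whole-space anisotropic Moser constant $n(n\widetilde\kappa_n)^{1/(n-1)}$. Here is a counterexample:
\begin{enumerate}
\item Take $E=\{x_n\le0\}$, so $h=-\lambda x_n$ and $\wF_\lambda=F_\lambda$ (Remark~\ref{remark2.2}).
\item Take Moser's concentrating profiles $u_k(x)=\phi_k(F^o_\lambda(x_0-x))$ supported in a small ball around an interior point $x_0$.
\item Since $F_\lambda(DF^o_\lambda)=1$, their energy is $n\widetilde\kappa_n\int|\phi_k'(s)|^ns^{n-1}ds=1$.
\item Yet $\int_\Omega e^{\alpha u_k^{n/(n-1)}}dx\to\infty$ for every $\alpha>n(n\widetilde\kappa_n)^{1/(n-1)}$.
\end{enumerate}
So the statement as written is false. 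Your argument should conclude with what it actually proves: the bound with $\alpha=n(n\kappa)^{1/(n-1)}$, which is attained asymptotically by the same profiles centred on $\partial E$ when $E$ is a half-space. The right-hand side must be $C(n)|\Omega|$; the factor $|\Omega|$ is unavoidable, as $u\equiv0$ shows.
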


Capillary Sobolev inequality and Moser-Trudinger inequality outside convex domain we have established in Theorem \ref{theorem4.4} and Theorem \ref{theorem1.6} are in fact can be seen the inequalities defined on Finsler manifold. Recall the notion of a Finsler manifold. Let $M$ be a connected $n$-dimensional $C^{\infty}$ manifold and $TM=\bigcup_{x\in M} T_{x}M$ be its tangent bundle. The pair $(M,F)$ is a Finsler manifold if the continuous function $F:TM \rightarrow[0,\infty)$ satisfies the following conditions
\vskip0.1cm

 \begin{itemize}
\item[(1)] $F\in C^{\infty}(TM\setminus\{0\})$;
\vskip0.1cm

\item[(2)] $F(x,ty)=tF(x,y)$ for all $t\geq0$ and $(x,y)\in TM$;
\vskip0.1cm

\item[(3)] $g_{ij}(x,y):=\left(\frac{1}{2}F^2\right)_{y_i y_j}(x,y)$ is positive definite for all $(x,y)\in TM\setminus\{0\}$.
\end{itemize}
The Finsler metric $F$ is called reversible if $F(x,-y)=F(x,y)$, otherwise it is called nonreversible (or irreversible).  If $g_{ij}(x)=g_{ij}(x,y)$ is independent of $y$,  then $(M,F)$ is called a Riemannian manifold. It follows immediately from the definition that $(\Omega,\widetilde{F}_{\lambda})$ is a reversible Finsler manifold. For more details about Finsler manifold, one can refer to \cite{K2}.
\medskip

Symmetrization is also a useful tool in comparing solutions of elliptic or quasilinear elliptic equation with the Dirichlet boundary or Neuman boundary or Robin boundary condition. The classical Talenti comparison result claims that: if $u$ and $v$ are the solutions of the following two  boundary value problems, respectively:
\[\begin{cases}
-\Delta u=f&\mbox{in\ }\Omega,\\
u=0&\mbox{on\ }\partial\Omega,
\end{cases}\quad\text{and}\quad
\begin{cases}
-\Delta v=f^{\#}&\mbox{in\ }\Omega^{\#},\\
v=0&\mbox{on\ }\partial\Omega^{\#},
\end{cases}
\]
where $f^{\#}$ is the classical Schwartz symmetrization of $f$. Then $u^{\#}(x)\leq v(x)$ for any $x\in\Omega^{\#}$ (centered at the origin with volume $|\Omega|$).

Talenti's comparison results have been widely studied with different operators and various boundary conditions in recent years. Alvino et al. \cite{ANT} established the Talenti comparison principle for Laplace operator with Robin boundary conditions, while Chen-Yang \cite{CY25} extended this result to Finsler Laplacian operator case. The Talenti comparison principle with the Neuman boundary has also been established in \cite{CNT,CY24}. In this paper, we consider the capillary Talenti comparison principle outside convex domain and obtained the following result.

\bt\label{theorem4.2}(Capillary  Talenti comparison principle outside convex domain)
Let $u\in W_0^{1,2}(\Omega; E^c)$ be a weak solution of the following mixed boundary value problem:
\be\label{e4.2}\begin{cases}
-\div\left(\wF_{\lambda}(\nabla u)D\wF_{\lambda}(\nabla u)\right)=f&\mbox{in\ }\Omega\\
u=0&\mbox{on\ }\partial\Omega\cap E^c\\
\wF_{\lambda}(\nabla u)D\wF_{\lambda}(\nabla u)\cdot\nu=0&\mbox{on\ }\partial\Omega\cap\partial E^c,
\end{cases}\ee
where $\wF_{\lambda}$ is the capillary gauge defined in \eqref{e3.2}.
Let $f^*$ denote the capillary Schwartz symmetrization of $f$ and consider the symmetrized problem:
\be\label{e4.4}\begin{cases}
-\div\left(F_{\lambda}(\nabla v)D F_{\lambda}(\nabla v)\right)=f^{*}&\mbox{in\ }B^+_r(-r\lambda e_n)\\
v=0&\mbox{on\ }\partial B_r(-r\lambda e_n)\cap \R_+^n\\
 F_{\lambda}(\nabla v)D F_{\lambda}(\nabla v)\cdot\nu=0&\mbox{on\ } B_r(-r\lambda e_n)\cap\partial \R_+^n.
\end{cases}\ee
 Then, the capillary Schwartz symmetrization $u^{*}$ of $u$ satisfies $u^{*}\leq v$ almost everywhere in $B^+_r(-r\lambda e_n)$. Moreover, the equality holds if and only if $\Omega$ sits on a facet of $\Omega$ and is isometric to $B_r^{+}(-r\lambda e_n)$.
\et

As an direct application of the Capillary  Talenti comparison principle outside convex domain, we can immediately derive the following capillary Bossel-Daners inequality outside convex domain and its rigidity.

%Recall the first eiegenvalue problem of the Laplace operator:
%\[\begin{cases}
%-\Delta u=\lambda_1u\quad&\mbox{in\ }\Omega,\\
% u=0\quad&\mbox{on\ }\partial\Omega,
%\end{cases}\]
%where $\nu$ denotes the outer unit normal to $\partial \Omega$, and $\lambda_1$ denote the  first eiegenvalue of the Laplace operator, i.e.,
%\[\lambda_1=\min_{0\neq u\in W_0^{1,2}(\Omega)}\frac{\int_{\Omega}|\nabla u|^2dx}{\int_{\Omega}u^2dx}.\]

%\noindent It is well-known that the Bossel-Daners inequality (namely the Faber-Krahn inequality) holds as follows
%\[\lambda_1(\Omega)\geq\lambda_1(\Omega^*),\]
%where $\Omega^*$  denotes the ball, centered at the origin, has the same Lebesgue measure with $\Omega$. The above inequality can be proved by the calssical Talenti comparison result.

\bt\label{corollary4.1} (Capillary Bossel-Daners inequality outside convex domain)
Let $\widetilde{\lambda}_1(\Omega;E^c)$ be defined as above, then
\be\label{e5.20}
\widetilde{\lambda}_1(\Omega;E^c)\geq \widetilde{\lambda}_1(B_{r}(-r\lambda e_n);\R^n_+),
\ee
where $\Omega$ is a set of finite perimeter outside some convex domain $E$ and $$\widetilde{\lambda}_1(\Omega;E^c)=\min_{ 0\neq u\in W_0^{1,2}(\Omega;E^c) }\frac{\int_{\Omega}|\wF_{\lambda}(\nabla u)|^2dx}{\int_{\Omega}u^2dx}.$$ Moreover, the equality holds if and only if $\Omega$ sits on a facet of $\Omega$ and is isometric to $B_r^{+}(-r\lambda e_n)$.
\et

% If $F(x,ty)=|t|F(x,y)$ for all $t\in\mathbb{R}$ and $(x,y)\in TM$, then $(M,F)$ is a reversible Finsler manifold. If $g_{ij}(x)=g_{ij}(x,y)$ is independent of $y$,  then $(M,F)$ is called a Riemannian manifold.  The Finsler metric $F$ is called reversible if $F(x,-y)=F(x,y)$, otherwise it is called nonreversible (or irreversible). Clearly, a Riemannian metric is always a reversible Finsler metric.

%Let $F_{\lambda}(x,\xi):T\Omega\setminus\{0\}\rightarrow\mathbb{R}$ be defined by
%\[F_{\lambda}(x,\xi):=|\xi|+\nabla h(x)\cdot\xi \quad\text{ for any } x\in\Omega,\ \xi\in T\Omega\setminus\{0\},\]
%where $h\in C^{\infty}(\overline{\Omega})$ is a harmonic function satisfying the boundary value problem introduced in \eqref{e3.1}. Then, $(\Omega,F_{\lambda}(x,\xi))$ defines a  Finsler manifold. Indeed, the smoothness of $h$ implies that $F_{\lambda}(x,\xi)\in C^{\infty}(T\Omega\setminus\{0\})$. The positive homogeneity of degree one is immediate: for any $t\geq0$,
%\[F_{\lambda}(x,t\xi)=|t\xi|+\nabla h(x)\cdot (t\xi)=t\xi+t(\nabla h(x)\cdot\xi)=tF_{\lambda}(x,\xi).\]
%Moreover, since Hess$\left(\frac{1}{2}F^2(\xi)\right)$ is positive definite on $\mathbb{R}^n\setminus\{0\}$  (as $F$ defines a Minkowski norm for each fixed $x$), it follows that  $\left(\frac{1}{2}F^2\right)_{\xi_i\xi_j}(x,\xi)$ is positive definite in $T\Omega\setminus\{0\}$. Thus, $(\Omega,F_{\lambda}(x,\xi))$ is indeed a Finsler manifold.

This paper is organized as follows: In Section 2, we review some preliminary knowledge about the anisotropic perimeter, the capillary Schwartz symmetrization.
In Section 3, we establish the anisotropic co-area formula outside convex domain and provide the proof of Theorem \ref{theorem4.1}. In Section 4, we establish the sharp capillary Sobolev's inequality and capillary Moser-Trudinger inequality outside  convex domain. In Section 5, we study the capillary Talenti comparison principle outside convex domain and deduce the capillary Bossel-Daners inequality outside convex domain and its rigidity.

%%%%%%  \par首行空两格

  \s{Preliminaries }

  \medskip
In this section, we recall the necessary background on anisotropic perimeters and introduce a reformulation of the capillary isoperimetric inequality outside convex domain established in \cite{FJMP}. This reformulation, based on a suitable gauge function, will play a crucial role in the proof of our main results.

   \subsection{Anisotropic perimeter}

%In fact, the authors further demonstrated that this relative isoperimetric inequality remains valid outside any closed convex set, provided that the set satisfies the $\lambda$-ABP  property. To formalize this property, we introduce the following definition:

%With this definition in place, we can now state the relative isoperimetric inequality outside any convex set, which plays a central role in this work. To formulate this result, we impose regularity conditions on the set $E$ and the domain $\Omega$. Specifically, we assume that:

% \be\label{e1.7}
%E\subset\R^n\ \mbox{is a closed convex set of class }\ C^2
% \ee
 %and
 %\be\label{e1.8}\begin{split}
% \Omega\subset\R^n\backslash E\ \mbox{is a bounded Lipschitz set such that $\Gamma:=\partial\Omega\backslash E$} \\
 %\mbox{is a (n-1)-manifold with boundary of class $C^2$.}\end{split}
% \ee
%\textbf{Theorem B:}(Relative Isopermetric Inequality Outside Convex Sets) Let $E\subset\R^n$ be a closed convex satisfying the regularity condition \eqref{e1.7} and assume that $E$ satisfies the $\lambda$-ABP property for some $\lambda\in(-1,1)$. Then, for every open set $\Omega$ satisfying \eqref{e1.8} and $|\Omega|=|B_r(-r\lambda e_n)\cap\R^n_+|$, the following inequality holds:
%\be\label{e1.9}
%P_{\lambda}(\Omega;E^c)\geq P_{\lambda}(B_r(-r\lambda e_n);\R^n_+).
%\ee
%Furthermore, if $P_{\lambda}(\Omega;E^c)= P_{\lambda}(B(-\lambda e_n);\R^n_+)$, then $\Omega$ coincides with a solid spherical cap isometric to $B(-\lambda e_n)\cap R^n_+$, supported on a facet of $E$.

  \

  \medskip
   We begin by  recalling  the notion of the Wulff ball.  Let $F:\R^n\rightarrow [0,+\infty]$ be a convex function satisfying the homogeneity property:
  \[
  F(tx)=|t|F(x),\quad\forall x\in \R^n,\   \forall t\in\R.
  \]
  Restricting $F$ on $\SR^{n-1}$, the Cahn-Hoffman map $\Phi:\SR^{n-1}\rightarrow\R$ is given by:
  \[\Phi(x):=\nabla F(x).\]
  The image $\Phi(\SR^{n-1})$ is called the Wulff shape.
 The corresponding dual metric of $F$ is defined as:
\[F^o(x)=\sup_{\xi\in K}\left<x,\xi\right>.\]
where $K(x)=\{x\in\R^n:F(x)\leq1\}.$ $F^o(x)$ is also a convex, one-homogeneous function and $F$, $F^o$ are polar to each other in the sense that
\be\label{e2.8}F^o(x)=\sup_{\xi\neq 0}\frac{\left<x,\xi\right>}{F(\xi)},\quad\mbox{and}\quad F(x)=\sup_{\xi\neq 0}\frac{\left<x,\xi\right>}{F^o(\xi)}.\ee
It is clear that $F^o(x)$ is the gauge function of the dual set $K^o$, defined as
\[K^o(x)=\{x\in\R^n:F^o(x)\leq1\},\]
which is also referred to as the unit Wulff ball. We denote the measure of $K^o$ by $\kappa_n$. For convenience, we define the Wulff ball of radius $r$ centered at $x_0$ as
 \[\WR_r(x_0):=r K^o+x_0\]
It is well-known that $F$ and $F^o$ satisfy the following properties:
\be\label{e2.7}F(DF^o(x))=1,\quad DF(x)\cdot x=F(x),\quad\mbox{and\ }F^o(x)DF(DF^o(x))=x.\ee
 For further details, we refer to the literature \cite{AFT,LA,R}.

 \medskip
Now, let $\Sigma$ be an open subset of $\R^n$. For a function $u\in BV(\Sigma)$,  the total variation with respect to $F$  is defined as  (see \cite{AB}):
\[\int_{\Sigma}|Du|_{F}dx=\sup\left\{\int_{\Sigma}u  \div \sigma dx:\sigma\in C_0^1(\Sigma;\R^n), F^o(\sigma)\leq 1\right\}.\]
A Lebesgue measurable set $\Omega\in\R^n$ is said to have locally finite perimeter with respect to $F$, if for every compact set $K\subset\R^n$,
\[\sup\left\{\int_{\Omega} \div \sigma dx:\sigma\in C_0^1(\R^n;\R^n), \mathrm {spt}{~\sigma}\subset K, \sup_{\R^n}F^o(\sigma)\leq 1\right\}<\infty.\]
If this quantity is bounded independently of $K$, then $\Omega$ is said to have finite perimeter in $\R^n$.
The relative anisotropic perimeter of $\Omega$ in $\Sigma$ is defined as
\[P_F(\Omega;\Sigma)=\int_{\Sigma}|D_{\chi_{\Omega}}|_{F}dx=\sup\left\{\int_{\Omega}\div \sigma dx:\sigma\in C_0^1(\Sigma;\R^n), F^o(\sigma)\leq 1\right\}.\]
This anisotropic perimeter, or anisotropic surface energy, can also be expressed as
\be\label{e2.1}P_F(\Omega;\Sigma)=\int_{\partial\Omega^*\cap\Sigma}F(\nu_{\Omega})d\HR^{n-1},\ee
where $\partial\Omega^*$ is the reduced boundary of $\Omega$, and $\nu_{\Omega}$ is the measure-theoretic outer unit normal to $\Omega$ (see \cite{AB}).   Alvino et al. established the following anisotropic isoperimetric inequality by using the Brunn-Minkowski inequality:
\be\label{e2.3}
P_F(\Omega;\R^n)\geq n\kappa_n^{\frac{1}{n}}|\Omega|^{1-\frac{1}{n}}.
\ee
In the special case where $F(\xi)=|\xi|$, the anisotropic perimeter \eqref{e2.1} reduces to the classical perimeter $P(\Omega;\Sigma)$, and the inequality \eqref{e2.3}  simplifies to the classical isoperimetric inequality established by De Giorgi \cite{DG}.

%\par
%In this paper, we focus on an relative isoperimeteric inequality outside convex sets. Let $E\subset\RN$ be a convex set with nonempty interior. Given a set of finite perimeter $\Omega$ in $\RN\backslash E$ denoted by $E^c$, for some $\lambda\in(-1,1)$, we consider the capillary energy:
%\be\label{e2.5}
%P_{\theta}(\Omega; E^c):=P(\Omega; E^c)-\lambda \HR^{N-1}(\partial\Omega\cap\partial E^c).\ee
% Fusco et al. proved the following relative isopermetric inequality outside convex domain $E$:

%\textbf{Theorem B}(relative isopermetric inequality outside convex sets):\\
%Let $\Omega$ be a bounded domain outside a convex set $E$. Assume that $\RN_+=\{x\in\RN:\left<x, e_n\right> >0\}$ be the upper half-space and $e_n=(0,0,\cdot\cdot\cdot, 1)$. For any $\theta\in(0,\pi)$, set the spherical cap $C_{r,\theta}:=B(-r cos\theta E_N, r)\cap\RN_+$  such that $|C_{r,\theta}|=|E|$ , then we have

%\be\label{e2.2}
%P_{\theta}(\Omega; E^c)\geq P_{\theta}(C_{r,\theta}; \RN_+),
%\ee
%where the equality holds if and only if $\Omega=C_{r,\theta}$.

 \subsection{Capillary Schwartz symmetrization outside convex domain}
 \

 \medskip
%Symmetrization  is a classical and influential method  in the study of functional and geometric inequalities. Schwartz symmetrization is a particular kind of rearrangement technique for functions defined on a bounded domain $\Omega\subset\R^n$. It constructs a radially decreasing function on a ball centered at the origin with the same volume as $\Omega$, that preserves the range of the original function  and some essential  geometric and measure-theoretic properties.
  Let $u$ be a measurable function in $\Omega$, for $t\in\R$, the level set $\{u>t\}$ and $\{u=t\}$ is defined as
 \[\{u>t\}:=\{x\in\Omega:u(x)>t\}\quad\text{and}\quad \{u=t\}:=\{x\in\Omega:u(x)=t\},\]
  respectively.   Then, the distribution function of $u$ is defined as
\[\mu (t)=|\{ u>t\}|\quad\mbox{for any }t\geq\mathrm{ess}. \inf(u).\]
 It is easy to derive that $\mu (t)$ is a monotonically decreasing function of $t$ and for $t\geq \mathrm{ess}. \sup(u)$, we have $\mu (t)=0,$ while for $t\leq\mathrm{ess}. \inf(u)$, we have $\mu (t)=|\Omega|$. Thus the range of $\mu $ is the interval $[0,|\Omega|]$.

\medskip
We now introduce the concept of capillary Schwartz symmetrization outside   convex domain.  Let $u:\R^n\setminus E\rightarrow\R$ be a non-negative measurable function that vanishes at infinity. For each $t\geq0$, we  define $r_t$ as the radius of the spherical cap in the upper half-space denote by $B^+_{r_t}(-r_t\lambda e_n)$ such that its volume equals the measure of the super-level set $\mu(t)$. Specifically, we set
 \[\kappa_{\lambda}r_t^n=|B^+_{r_t}(-r_t\lambda e_n)|=\mu(t).\]
 where $\kappa_{\lambda}=|B^+(-\lambda e_n)|$ is a constant depending on $\lambda$.
 The capillary Schwartz symmetrization of $u$, denoted by $u^{*}$ is then defined as
\[u^{*}(x)=\inf\{t\geq 0:\mu(t)<\kappa_{\lambda}|x-(-r_t\lambda e_n)|^n\}=\inf\{t\geq 0:r_t<|x+r_t\lambda e_n|\}.\]
By construction, it is evident that for any $t>0$, the super-level set $\{u^*>t\}$ has the same measure as $\{u>t\}$. This property aligns with the classical idea of Schwartz symmetrization, where the symmetrized function preserves the measure of its super-level sets while rearranging them into a more symmetric form.

  %This equivalence allows us to reformulate the capillary Schwarz symmetrization  $u^{*}$ in terms of the unidimensional decreasing rearrangement of $u$. Specifically, we have
%\[u^{*}=u^{\#}(\kappa_{\lambda}|F^o_{\lambda}(x)|^n),\]
%where $u^{\#}(s)=\inf\{t\geq0:\mu (t)<s\}$. This reformulation highlights the connection between the capillary Schwartz symmetrization and the classical Schwartz symmetrization, while incorporating the geometric and analytical features introduced by the capillary gauge $F_{\lambda}$.

\medskip
In this paper, our analysis relies on the following capillary isoperimetric inequality outside convex domain, which was established in  \cite{FJMP} by the $\lambda$-ABP argument:

\bl\label{lemma2.1} (Capillary isopermetric inequality outside convex domain): Let $\lambda\in(-1,1)$ and let $E$ be a closed convex domain.  For any domain $\Omega\subset\R^n\backslash E$ of finite perimeter with $|\Omega|=|B_r^+(-r\lambda e_n)|$, the following inequality holds:
\be\label{e1.4}
P_{\lambda}(\Omega;E^c)\geq P_{\lambda}(B_r(-r\lambda e_n;\R^n_+).
\ee
 Moreover, the equality holds if and only if $\Omega$  is supported on a facet of  $E$ and is isometric to the spherical cap $B_r^+(-r\lambda e_n)$.
\el

In fact, the capillary surfaces in the upper half-space can be represented as an anisotropic area functional:
 \be\label{e3.3}
 P_{\lambda}(B_r(-r\lambda e_n);\R^n_+)=\int_{\partial B_r(-r\lambda e_n)\cap\R^n_+}F_{\lambda}(\nu)d\HR^{n-1},\ee
 where $F_{\lambda}:\R^n\rightarrow\R$ is a special gauge introduced in \cite{LXZ},  given by
 \be\label{e2.4} F_{\lambda}(\xi)=|\xi|-\lambda\left<\xi, e_n\right>,\ee
and the Euclidean ball $B_{r}(-r\lambda e_n)$ is   precisely  the Wulff ball $\mathcal{W}_r$ centered at the origin with respect to $F_{\lambda}$.

\medskip

We conclude this subsection by showing that the anisotropic gauge $\widetilde{F}_{\lambda}$ defined in \eqref{e3.2} allows  us to rewrite the capillary isopermetric inequality   \eqref{e1.4} as an anisotropic isopermetric inequality. This representation will play a central role in our subsequent analysis. It is straightforward to verify that $\wF_{\lambda}$ is a one-homogeneous function and is smooth on $\R^n\backslash\{0\}$.
 Since
 \[\nabla \wF_{\lambda}(\xi)=\frac{\xi}{|\xi|}+\nabla h,\]
 then the Wulff shape associated with  $\wF_{\lambda}$ is given by:
 \[\nabla \wF_{\lambda}(\SR^{n-1})=\SR^{n-1}+\nabla h=\{|x-\nabla h|=1\}.\]
 \bo\label{proposition3.2}
 The dual gauge $\wF_{\lambda}^o:\R^n\rightarrow\R$ is given by:
 \[\wF_{\lambda}^o(x)=\frac{|x|^2}{\sqrt{\left<x\cdot\nabla h\right>^2+|x|^2(1-|\nabla h|^2)}+\left<x,\nabla h\right>}.\]
 \eo
 \bp
 Consider the convex set $\widetilde{K}$ determined by $\nabla \wF_{\lambda}(\SR^{n-1})=\{|x-\nabla h|=1\}$. We want to find the radial function of $\widetilde{K}$. Let $y\in\nabla \wF_{\lambda}(\SR^{n-1})$ be given by $y=\rho(x)x$, where $\ x\in\SR^{n-1}$. Then,
 \[|\rho(x)x-\nabla h|=1,\]
which implies that
\[\rho(x)=\sqrt{\left<x\cdot\nabla h\right>^2+1-|\nabla h|^2}+\left<x,\nabla h\right>.\]
Thus, $\rho:\SR^{n-1}\rightarrow\R$ is the radial function of $\widetilde{K}$. By a classical result, the support function of $\widetilde{K}^o$  equals to the radial function of $\widetilde{K}$ (see e.g.,\cite{RS}).
Therefore,
\[\wF^o_{\lambda}(x)=\frac{1}{\rho(x)}=\frac{1}{\sqrt{\left<x\cdot\nabla h\right>^2+1-|\nabla h|^2}+\left<x,\nabla h\right>}.\]
Extending $\wF^o_{\lambda}$ to $\R^n$ via one-homogeneity, i.e., $\wF^o_{\lambda}(x)=|x|\wF^o_{\lambda}(\frac{x}{|x|})$. Then we complete the proof.
 \ep

\br\label{remark2.2}
If we consider the problem \eqref{e3.1} in the half-space:
 \[\begin{cases}
 -\Delta h=0&\mbox{in\ } \R^n_+\\
 \frac{\partial h}{\partial \nu}=\lambda&\mbox{on\ } \partial\R^n_+,
 \end{cases}\]
 then it is easy to check that $h=-\lambda x_n$ and $\nabla h=-\lambda e_n$. Therefore, in the case of $E^c=\R^n_+$, we find that
 \[\wF_{\lambda}(\xi)=|\xi|-\lambda\left<\xi, e_n\right>=F_{\lambda}(\xi).\]
\er

 \bo\label{proposition3.1}
For any $\lambda\in(-1,1)$, the capillary surface outside convex domain can be rewritten as an anisotropic perimeter as follows
 \[P(\Omega;E^c)-\lambda\HR^{n-1}(\partial^*\Omega\cap\partial E)=\int_{\partial\Omega\cap E^c}\wF_{\lambda}(\nu)d\HR^{n-1}.\]
 \eo
 \bp
 Using the divergence theorem in equation \eqref{e3.1}, we obtain
 \[\begin{split}
 0=\int_{\Omega}\Delta hdx=&\int_{\partial\Omega\cap E^c}\frac{\partial h}{\partial\nu}d\HR^{n-1}+\int_{\partial\Omega\cap \partial E^c}\frac{\partial h}{\partial\nu}d\HR^{n-1}\\
 =&\int_{\partial\Omega\cap E^c}\frac{\partial h}{\partial\nu}d\HR^{n-1}+\lambda\HR^{n-1}(\partial\Omega\cap\partial E^c).
 \end{split}\]
This implies
 \[\int_{\partial\Omega\cap E^c}\frac{\partial h}{\partial\nu}d\HR^{n-1}=-\lambda\HR^{n-1}(\partial\Omega\cap\partial E^c).\]
By the definition of $\wF_{\lambda}$, we have
 \be\label{e3.4}\begin{split}
 P_{\lambda}(\Omega; E^c)=&P(\Omega; E^c)+\int_{\partial\Omega\cap E^c}\frac{\partial h}{\partial\nu}d\HR^{n-1} \\
  =&\int_{\partial\Omega\cap E^c}\left(|\nu|+\frac{\partial h}{\partial\nu}\right)d\HR^{n-1}\\
 =&\int_{\partial\Omega\cap E^c}\wF_{\lambda}(\nu)d\HR^{n-1},\end{split}\ee
which completes the proof.
 \ep

  In this way, the isoperimeteric inequality \eqref{e1.4} can be rewritten as the  capillary Choe-Ghomi-Ritor\'{e} relative isoperimetric inequality outside convex domain for the special gauge $\wF_{\lambda}$ and $F_{\lambda}$. Specifically, let $\lambda\in(-1,1)$ and $E\subset\R^n$ be a closed convex set. Then, for every open set $\Omega\subset\R^n\backslash E$ such that $|\Omega|=|B_r^+(-r\lambda e_n)|$, the isoperimeteric inequality can be expressed as
\be\label{e3.6}P_{\wF_{\lambda}}(\Omega; E^c)\geq P_{F_{\lambda}}(B_r(-r\lambda e_n);\R^n_+),\ee
where $\wF_{\lambda}$ and $F_{\lambda}$ are defined by \eqref{e3.2} and \eqref{e2.4}, respectively.
Moreover, the equality holds if and only if $\Omega$ coincides with a solid spherical cap that is isometric to $B^+_r(-r\lambda e_n)$, supported on a facet of $E$.

\s{Proof of the Theorem \ref{theorem4.1}}

In this section, we are concerned with the anisotropic co-area formula outside convex domain and give the proof of capillary P\'{o}lya-Szeg\"{o} principle outside convex domain.
The co-area formula is a fundamental tool in geometric measure theory and analysis, providing a relationship between integrals over level sets of a function and its gradient.  In  general, if $u:\R^n\rightarrow\R$ is a Lipschitz continuous function with ess$\ \inf |\nabla u|\neq 0$, and $g:\R^n\rightarrow\R$ is an integrable function,  then
\[\int_{\{u>t\}}gdx=\int_t^{\infty}\left(\int_{\{u=\tau\}}\frac{g}{|\nabla u|}d\HR^{n-1}\right)d\tau.\]
In particular, differentiating with respect to $t$ yields
\[-\frac{d}{dt}\left(\int_{\{u>t\}}g dx\right)=\int_{\{u=t\}}\frac{g}{|\nabla u|}d\HR^{n-1}.\]
This formula will be instrumental in our analysis, especially when $g(x)=F^p(\nabla u)$, where $F$ is a gauge function and $p\geq 1$.

\bl\label{theorem4.5}
Let $u\in W^{1,p}(\Omega; E^c)$ be a non-negative function satisfying the following boundary value problem:
\be\label{e4.13}\begin{cases}
-\div\left( F^{p-1} (\nabla u)D F (\nabla u)\right)=f&\mbox{in\ }\Omega\\
u=0&\mbox{on\ }\partial\Omega\cap E^c\\
 D F (\nabla u)\cdot\nu=0&\mbox{on\ }\partial\Omega\cap\partial E^c,
\end{cases}\ee
where $F$ is the gauge defined by \eqref{e2.8} and $\nu$ is the unit normal vector to $\partial\Omega$.
Then for $1\leq p<\infty$, the following identity holds:
\be\label{e4.14}-\frac{d}{dt}\int_{\{u>t\}} F ^p(\nabla u)dx=\int_{\{u=t\}}\frac{ F ^p(\nabla u)}{|\nabla u|}d\HR^{n-1},\ee
where $\{u>t\}:=\{x\in\Omega:u(x)>t\}$ and $\{u=t\}:=\{x\in\Omega:u(x)=t\}$.
\el

\bp
Define $f=-\div\left( F^{p-1}  (\nabla u)D F (\nabla u)\right)$,  for any test function $\psi\in W_0^{1,p}(\Omega; E^c)$, we have
\be\label{e4.15}\int_{\Omega} F^{p-1} (\nabla u)D F (\nabla u)\cdot\nabla \psi dx-\int_{\partial\Omega\cap E^c} F^{p-1} (\nabla u)D F (\nabla u)\cdot\nu\psi dx=\int_{\Omega}f\psi dx,\ee
where we used the boundary condition that $D F (\nabla u)\cdot\nu=0$ on $\partial\Omega\cap\partial E^c$. Let $t>0$ and choose $\psi=(u-t)^+$, which belongs to $\in W_0^{1,p}(\Omega; E^c)$, and is supported on $\{u>t\}$. Substituting $\psi$ into the above identity \eqref{e4.15}, we obtain
\[\int_{\{u>t\}} F^{p} (\nabla u)dx=\int_{\{u>t\}}f(u-t)dx.\]
Differentiating with respect to $t$ yields
\be\label{e4.16}
-\frac{d}{dt}\int_{\{u>t\}} F^{p} (\nabla u)dx=\int_{\{u>t\}}fdx.
\ee
Observe that the boundary decomposes as
 \[\begin{aligned}\partial\{u>t\}=&(\partial\{u>t\}\cap\Omega)\cup(\{u>t\}\cap\partial\Omega\cap E^c)\cup(\{u>t\}\cap\partial E)\\=&\{u=t\}\cup(\{u>t\}\cap\partial E),\end{aligned}\]
where we used the fact that $\{u > t\} \cap \partial\Omega \cap E^c = \emptyset$ since $u = 0$ on $\partial\Omega \cap E^c$.
Hence, by the definition of $f$ and the divergence theorem, we  have
\[\begin{split}
-\frac{d}{dt}\int_{\{u>t\}}F^{p}(\nabla u)dx
=&\int_{\{u>t\} }-\div\left(F^{p-1}(\nabla u)DF(\nabla u)\right)dx\\
=&-\int_{\partial\{u>t\}}F^{p-1}(\nabla u)DF(\nabla u)\cdot\nu d\HR^{n-1}\\
=&-\int_{\{u=t\} }F^{p-1}(\nabla u)DF(\nabla u)\cdot\nu d\HR^{n-1}
-\int_{\{u>t\}\cap\partial E}F^{p-1}(\nabla u)DF(\nabla u)\cdot\nu d\HR^{n-1}\\
=&\int_{\{u=t\} }\frac{F^p(\nabla u)}{|\nabla u|} d\HR^{n-1}.
\end{split}\]
Here, we used the fact that $\nu=-\frac{\nabla u}{|\nabla u|}$  on the level set $\{u=t\}$, since $u$ is constant on this surface and $\{u>t\}$ in its interior. This completes the proof.
\ep

\bl\label{theorem4}
Let   $u\in W_0^{1,p}(\Omega;E^c)$ be a non-negative function satisfying
\[D F (\nabla u)\cdot\nu=0\quad\mbox{on\ }\partial\Omega\cap\partial E^c.\]
  Then. for almost every $t$ in the range of $u$, we have
\be\label{e4.17}
-\mu'(t)=\int_{\{u=t\}}\frac{1}{|\nabla u|}d\HR^{n-1}=\int_{\{u^*=t\}}\frac{1}{|\nabla u^*|}d\HR^{n-1},
\ee
where $u^*$ is the capillary Schwartz symmetrization of $u$ and  $\mu$ is the distribution function of $u$.
\el
\bp
Let $\e>0$, define
\[f=-\div\left(\frac{DF(\nabla u)}{F(\nabla u)+\e}\right).\]
Multiplying by $(u-t)^+$ and integrating by parts, we obtain
\[\int_{\{u>t\}} \frac{F(\nabla u)}{F(\nabla u)+\e}dx=\int_{\{u>t\}}f(u-t)dx.\]
Differentiating with respect to $t$ yields
\be\label{e4.18}-\frac{d}{dt}\int_{\{u>t\}} \frac{F(\nabla u)}{F(\nabla u)+\e}dx=\int_{\{u>t\}}f dx.\ee
For sufficiently small $h>0$, integrating from $t-h$ to $t$ on both sides of \eqref{e4.18}, we get
\[\begin{split}
\int_{\{t-h<u\leq t\}}\frac{F(\nabla u)}{F(\nabla u)+\e}dx=&\int_{t-h}^t\left(\int_{\{u>\tau\}}fdx\right)d\tau\\
=&\int_{t-h}^t\left(\int_{\{u>\tau\}}-\div\left(\frac{DF(\nabla u)}{F(\nabla u)+\e}\right) dx\right)d\tau\\
=&-\int_{t-h}^t\left(\int_{ \{u=\tau\}}\frac{DF(\nabla u)}{F(\nabla u)+\e}\cdot\nu dx\right)d\tau-\int_{t-h}^t\left(\int_{ \{u>\tau\}\cap\partial E}\frac{DF(\nabla u)}{F(\nabla u)+\e}\cdot\nu dx\right)d\tau\\
=&\int_{t-h}^t\left(\int_{ \{u=\tau\}}\frac{F(\nabla u)}{F(\nabla u)+\e}\frac{1}{|\nabla u|}dx\right)d\tau.
\end{split}\]
Applying the dominated convergence theorem and taking the limit as $\e\rightarrow 0$, we obtain
\[\mu(t-h)-\mu(t)=\int_{t-h}^t\left(\int_{ \{u=\tau\}}\frac{1}{|\nabla u|}d\HR^{n-1}\right)d\tau.\]
Dividing by $h$ and taking the limit as $h\rightarrow 0$, the first equality in \eqref{e4.17} holds.
\par Let $r(t)$ be the radius of the  spherical cap $\{u^*>t\}$. Then $\mu(t)=\kappa_{\lambda}\left(r(t)\right)^n$,
 and so $\mu'(t)=n\kappa_{\lambda}(r(t))^{n-1}r'(t)$. Since $\mu(t)$ and $r(t)$ are monotonically decreasing functions, they are differentiable for almost every  $t$. Noting that $u^{*}(r(t))=t$, then we have $\HR^{n-1}(\{u^{*}=t\})=n\kappa_{\lambda}(r(t))^{n-1}$ and $r'(t)=\frac{1}{(u^{*})'(r(t))}$. Moreover, by implicit differentiation, we have $(u^{*})'=-|\nabla u^{*}|$. It follows that
\[\mu'(t)=\int_{\{u^{*}=t\}}\frac{1}{(u^{*})'(r(t))}d\HR^{n-1}=-\int_{\{u^{*}=t\}}\frac{1}{|\nabla u^{*}|}d\HR^{n-1}.\]
Then, we complete the proof.
\ep

 \par Applying the above results, we can now provide the proof of the capillary P\'{o}lya-Szeg\"{o} principle outside  convex domain.

\noindent\begin{proof}[\bf{Proof of Theorem \ref{theorem4.1}}]:
%Step 1. The case $p=1$.\\
%From co-area formula, isoperimeteric inequality \eqref{e2.2}, \eqref{e3.3} and \eqref{e3.4}, we have
%\[\begin{split}
%\int_{\Omega}\wF_{\theta}(\nabla u)dx=&\int_0^{\infty}\left(\int_{\partial\{u>t\}}\frac{\wF_{\theta}(\nabla u)}{|\nabla u|}d\HR^{N-1}\right)dt\\
%=&\int_0^{\infty}\left(\int_{\partial\{u>t\}}\wF_{\theta}(\nu_{\{u>t\}})d\HR^{N-1}\right)dt\\
%=&\int_0^{\infty} P_{\wF_{\theta}}(\{u>t\},\Omega)dt\\
%\leq &\int_0^{\infty} P_{F_{\theta}}(\{u^*>t\},C_{r,\theta})dt
%=\int_{C_{r,\theta}}F_{\theta}(\nabla u^*)dx.
%\end{split}\]
Since $\wF_{\lambda}$ is a special gauge,  by Lemma \ref{theorem4.5}, we know that for any $t>0$,
\[-\frac{d}{dt}\int_{\{u>t\}}\wF_{\lambda}^p(\nabla u)dx=\int_{\{u=t\}}\frac{\wF_{\lambda}^p(\nabla u)}{|\nabla u|}d\HR^{n-1}.\]
Similarly, for the symmetrized function $u^{*}$, we  have
\[-\frac{d}{dt}\int_{\{u^{*}>t\}} F_{\lambda}^p(\nabla u^{*})dx=\int_{\{u^{*}=t\}}\frac{ F_{\lambda}^p(\nabla u^{*})}{|\nabla u^{*}|}d\HR^{n-1}.\]
On the other hand,  we apply   H\"{o}lder inequality to the integral over the level set $\{u=t\}\cap E^c$. Specifically, we obtain
\[\int_{\{u=t\}}\frac{\wF_{\lambda}(\nabla u)}{|\nabla u|}d\HR^{N-1}\leq\left(\int_{\{u=t\}\cap E^c}\frac{\wF_{\lambda}^p(\nabla u)}{|\nabla u|}\right)^{\frac{1}{p}}\left(\int_{\{u=t\}\cap E^c}\frac{1}{|\nabla u|}\right)^{1-\frac{1}{p}}.\]
This inequality implies
\[\begin{split}
\int_{\{u=t\}}\frac{\wF_{\lambda}^p(\nabla u)}{|\nabla u|}d\HR^{n-1}\geq& \left(\int_{\{u=t\}}\wF_{\lambda}\left(\frac{\nabla u}{|\nabla u|}\right)d\HR^{n-1}\right)^p\left(-\mu'(t)\right)^{1-p}\\
=&\left(P_{\wF_{\lambda}}(\{u>t\};E^c)\right)^p\left(-\mu'(t)\right)^{1-p}\\
\geq&\left(P_{ F_{\lambda}}(\{u^*>t\};\R^n_+)\right)^p\left(-\mu'(t)\right)^{1-p}
=\int_{\{u^*=t\}}\frac{F_{\lambda}^p(\nabla u^*)}{|\nabla u^*|}d\HR^{n-1},
\end{split}\]
where in the last step, we used the conclusion of Lemma \ref{theorem4}, which relates the distribution function $\mu(t)$ to the level sets of $u$ and $u^{*}$. Combining these results, we deduce that
\[-\frac{d}{dt}\int_{\{u>t\}}\wF_{\lambda}^p(\nabla u)dx\geq-\frac{d}{dt}\int_{\{u^*>t\}}F_{\lambda}^p(\nabla u^*)dx.\]
Integrating both sides from $0$ to $+\infty$ with respect to $t$, we   complete  the proof.
\end{proof}

\vspace{-0.1cm}

\s{Proof of Theorem \ref{theorem4.4} and   Theorem \ref{theorem1.6}}

In this section, we establish sharp capillary Sobolev and Moser-Trudinger inequalities outside convex domain. Our analysis is based on the capillary P\'{o}lya-Szeg\"{o} principle established in Theorem \ref{theorem4.1}, subcritical approximation method and subcritical capillary Moser-Trudinger inequalities with the Neuman boundary condition. We first provide a proof for capillary Sobolev inequality with Neumann boundary condition.

%\bo\label{theorem4.3}(\cite{CFR})
%Given $\lambda\in(-1,1)$ and $1<p<n$, let $u$ be a non-negative function in the Sobolev space $\widetilde{W}^{1,p}(\R^n_+)$, defined as
%\[ \widetilde{W}^{1,p}(\R^n_+):=\{u\in L^{\frac{np}{n-p}}(\R^n_+):\nabla u\in L^p(\R^n_+)\}.\]
%Then, the following inequality holds:
%\[\|u\|_{L^{\frac{np}{n-p}}(\R^n_+)}\leq C(\lambda,p)\left(\int_{\R^n_+}(|\nabla u|-\lambda\left<\nabla u, e_n\right>)^p\right)^{\frac{1}{p}},\]
%where, $C(\lambda,p)$ is a constant given by
%\begin{equation}\label{e4.19}C(\lambda,p)=\left(\int_{\R^n_+}(|\nabla U_{\lambda,p}|-\lambda\left<\nabla U_{\lambda,p}, e_n\right>)^p\right)^{-\frac{1}{p}}.\end{equation}
%Here, $U_{\lambda,p}$ is the extremal function defined as
%\[U_{\lambda,p}=-\left(\frac{1}{\sigma_{p,\lambda}+F^o_{\lambda}(x)^{\frac{p}{p-1}}}\right)^{\frac{n-p}{p}},\]
%and $\sigma_{p,\lambda}>0$ is  a constant  determined by the normalization condition $\|U_{\lambda,p}\|_{L^{\frac{np}{n-p}}(\R^n_+)}=1$. Equality holds if and only if
%\[u(x)=CU_{\lambda,p}(\lambda(x-x_0))\]
%for some constant $C\leq0,\lambda\neq 0$ and some point $x_0\in\partial\R^n_+.$
%\eo

%As a direct corollary of Theorem \ref{theorem4.1} and Proposition \ref{theorem4.3}, we have the following result under a Neumann boundary condition.
\bl\label{corollary4.4}
 For $1<p<n$, let $u\in W_0^{1,p}(\Omega;E^c)$ be a non-negative function satisfying
\begin{equation}\label{e4.20} D \wF_{\lambda} (\nabla u)\cdot\nu=0\quad\mbox{on\ }\partial\Omega\cap\partial E^c.\end{equation}
 Then, for any $\lambda\in(-1,1)$, the following Sobolev inequality holds:
\[\int_{\Omega}|\wF_{\lambda}(\nabla u)|^pdx\geq C(\lambda,p)\left(\int_{\Omega}|u|^{\frac{np}{n-p}}dx\right)^{\frac{n-p}{n}}.\]
\el
\bp
From Theorem \ref{theorem4.1}, we have the inequality
\[\frac{\int_{\Omega}|\wF_{\lambda}(\nabla u)|^pdx}{\left(\int_{\Omega}|u|^{\frac{np}{n-p}}dx\right)^{\frac{n-p}{n}}}\geq\frac{\int_{B_r^+(-r\lambda e_n)}|F_{\lambda}(\nabla u^*)|^pdx}{\left(\int_{B_r^+(-r\lambda e_n)}|u^*|^{\frac{np}{ n-p}}dx\right)^{\frac{n-p}{n}}}.\]
Applying the anisotropic sharp  Sobolev inequality in the upper half-space, we obtain
\[\int_{B_r^+(-r\lambda e_n)}|F_{\lambda}(\nabla u^*)|^pdx\geq C^{-p}(\lambda,p)\left(\int_{B_r^+(-r\lambda e_n)}|u^*|^{\frac{np}{ n-p}}dx\right)^{\frac{n-p}{n}},\]
where $C(\lambda,p)$ is the best constant established in \cite{CFR}.    Equivalently, this can be rewritten as
    \[\int_{\Omega}|\wF_{\lambda}(\nabla u)|^pdx \geq C^{-p}(\lambda,p)\left(\int_{\Omega}|u|^{\frac{np}{n-p}}dx\right)^{\frac{n-p}{n}}.\]
This completes the proof.
\ep
In fact, the   anisotropic Neumann boundary condition \eqref{e4.20} can be removed by a subcritical approximation argument.

\begin{proof}[{\bf Proof of Theorem \ref{theorem4.4}}]:
  The argument is divided into two parts.

\medskip
Step 1. We first establish the anisotropic Poincar\'{e} inequality for functions  $u\in W_0^{1,p}(\Omega; E^c)$:
\be\label{e5.7}
\int_{\Omega}|u|^pdx\leq C\int_{\Omega}|\wF_{\lambda}(\nabla u)|^pdx.
\ee
Assume by contradiction that for any $k\geq 1$, there exists a sequence $u_k\in W_0^{1,p}(\Omega;E^c)$   such that
\[\int_{\Omega}|u_k|^p dx > k\int_{\Omega}|\wF_{\lambda}(\nabla u_k)|^p dx.\]
Define the normalized function $w_k(x)=\frac{u_k(x)}{\|u_k\|_{L^p(\Omega)}}$, which satisfies:
\be\label{e5.1}
w_k(x)=0\quad\mbox{on\ }\partial\Omega\cap E^c,
\ee
\be\label{e5.2}
\|w_k\|_{L^p(\Omega)}=1
\ee
and
\be\label{e5.3}
\int_{\Omega}|\wF_{\lambda}(\nabla w_k)|^pdx=\frac{1}{\|u_k\|_{L^p(\Omega)}^p}\int_{\Omega}|\wF_{\lambda}(\nabla u_k)|^pdx<\frac{1}{k}.
\ee
 Since $\alpha|\xi|\leq\wF_{\lambda}(\xi)\leq\beta|\xi|$ for any $\xi\in\R^n$ and some positive constants $\alpha\leq\beta$, then the inequalities \eqref{e5.2} and \eqref{e5.3}  indicate that $\|w_k\|_{W^{1,p}(\Omega)}$ is bounded. According to the compact embedding $W^{1,p}(\Omega)\hookrightarrow L^p(\Omega)$, there exists  $w\in W^{1,p}(\Omega)$ and a subsequence (still denoted by $w_k$) such that
\be\label{e5.4} w_k\rightarrow w (k\rightarrow\infty)\quad \mbox{in\ } L^{p}(\Omega),\ee
and
\be\label{e5.5} Dw_k\rightharpoonup Dw (k\rightarrow\infty)\quad \mbox{in\ } L^{p}(\Omega).\ee
From \eqref{e5.3} and \eqref{e5.5} we deduce $Dw=0$ a.e. $x\in\Omega$, implying $w$ is a constant. The boundary condition \eqref{e5.1} and and strong convergence \eqref{e5.4} yield
\be\label{e5.6} w=0 \quad\mbox{a.e.\ } x\in\overline{\Omega},\ee
where we used  the continuity of $w$.
Using again \eqref{e5.4} and by \eqref{e5.2}, we can deduce that
\[\|w\|_{L^p(\Omega)}=1,\]
which is a contradiction with \eqref{e5.6}.
Therefore, \eqref{e5.7} holds, and for any $u\in W_0^{1,p}(\Omega;E^c)$, $\int_{\Omega}|\wF_{\lambda}(\nabla u)|^p dx$ is equivalent to $\int_{\Omega}|\wF_{\lambda}(\nabla u)|^p dx+\int_{\Omega}|u|^p dx$. Then,
\be\label{e5.8}A_k=\inf_{u\in W_0^{1,p}(\Omega; E^c)}\frac{\int_{\Omega}|\wF_{\lambda}(\nabla u)|^p dx}{\|u\|_{p_k}^p}\quad  \mbox{\ for any\ } p_k<p^{*},\ee
is well-defined, where $p^{*}=\frac{np}{n-p}$ is the critical index. By Sobolev embedding Theorem, $A_k$ can be achieved by some $u_{p_k}$ with $p_k<p^{*}$.

\medskip
Step 2. We claim that
 \be\label{e5.13}
A=\inf_{u\in W_0^{1,p}(\Omega; E^c)}\frac{\int_{\Omega}|\wF_{\lambda}(\nabla u)|^{p}dx}{\left(\int_{\Omega}|u|^{p^{*}}dx\right)^{\frac{p}{p^{*}}}}=C^{-p}(\lambda,p)
\ee
is the best Sobolev constant outside convex domain. Through standard variational arguments, the function $u_{p_k}\in W_0^{1,p}(\Omega;E^c)$ satisfying
\be\label{e5.10}
\begin{cases}
-\div\left(\wF^{p-1}_{\lambda}(\nabla u)D\wF_{\lambda}(\nabla u)\right)=\lambda_k|u|^{p_{k}-2}u&\mbox{in\ }\Omega,\\
u=0&\mbox{on\ }\partial\Omega\cap E^c\\
\wF^{p-1}_{\lambda}(\nabla u)D\wF_{\lambda}(\nabla u)=0&\mbox{on\ }\partial\Omega\cap\partial E^c,
\end{cases}\ee
where $\lambda_k$ denotes the associated Lagrange multiplier. Applying H\"{o}lder inequality, we derive
%Then, by the capilary  P\'{o}lya-Szeg\"{o} principle \eqref{e4.1}, we have
%\be\label{e5.12}
%A_k=\frac{\int_{\Omega}|\wF_{\lambda}(\nabla u_p)|^pdx}{\left(\int_{\Omega}|u_p|^{p_k}dx\right)^{\frac{p}{p_k}}}
%\geq\frac{\int_{B_r(-r\lambda e_n)}|F_{\lambda}(\nabla u_p^{*})|^pdx}{\left(\int_{B_r(-r\lambda e_n)}|u^{*}_p|^{p_k}dx\right)^{\frac{p}{p_k}}}\ee
\[\left(\int_{\Omega}|u_{p_k}|^{p_k}dx\right)^{\frac{p}{p_k}}\leq\left(\int_{\Omega}|u_{p_k}|^{p^{*}}dx\right)^{\frac{p}{p^{*}}}|\Omega|^{\frac{p}{p_k}-\frac{p}{p^{*}}}.\]
 Combining this with the above H\"{o}lder inequality, we get
\[\begin{split}
\lim_{k\rightarrow\infty}A_k=&\lim_{k\rightarrow\infty}\frac{\int_{\Omega}|\wF_{\lambda}(\nabla u_{p_k})|^{p}dx}{\left(\int_{\Omega}|u_{p_k}|^{p_k}dx\right)^{\frac{p}{p_k}}}\\
\geq&\lim_{k\rightarrow\infty}\frac{\int_{\Omega}|\wF_{\lambda}(\nabla u_{p_k})|^{p}dx}{\left(\int_{\Omega}|u_{p_k}|^{p^{*}}dx\right)^{\frac{p}{p^{*}}}}|\Omega|^{\frac{p}{p^{*}}-\frac{p}{p_k}}
=\lim_{k\rightarrow\infty}\frac{\int_{\Omega}|\wF_{\lambda}(\nabla u_{p_k})|^{p}dx}{\left(\int_{\Omega}|u_{p_k}|^{p^{*}}dx\right)^{\frac{p}{p^{*}}}}.
\end{split}\]

On the other hand,  for any $u\in W_0^{1,p}(\Omega;E^c)$, we have
\[\lim_{k\rightarrow\infty}A_k\leq \lim_{k\rightarrow\infty}\frac{\int_{\Omega}|\wF_{\lambda}(\nabla u)|^{p}dx}{\left(\int_{\Omega}|u|^{p_k}dx\right)^{\frac{p}{p_k}}}=\frac{\int_{\Omega}|\wF_{\lambda}(\nabla u)|^{p}dx}{\left(\int_{\Omega}|u|^{p^{*}}dx\right)^{\frac{p}{p^{*}}}}.\]
The arbitrariness of $u$ implies that
\[\lim_{k\rightarrow\infty}A_k\leq \inf_{u\in W_0^{1,p}(\Omega; E^c)}\frac{\int_{\Omega}|\wF_{\lambda}(\nabla u)|^{p}dx}{\left(\int_{\Omega}|u|^{p^{*}}dx\right)^{\frac{p}{p^{*}}}}.\]
Therefore,
$\lim_{k\rightarrow\infty}A_k=A$.
Since $u_{p_k}$ is a solution to the equation \eqref{e5.10}, applying P\'{o}lya-Szeg\"{o} principle stated in \eqref{e4.1}, we can conclude that
\begin{equation}\label{e4.20}A=\lim_{k\rightarrow\infty}A_k\geq \frac{\int_{\Omega}|\wF_{\lambda}(\nabla u_{p_k})|^{p}dx}{\left(\int_{\Omega}|u_{p_k}|^{p^{*}}dx\right)^{\frac{p}{p^{*}}}}\geq \frac{\int_{B^+_r(-r\lambda e_n)}|F_{\lambda}(\nabla u_{p_k}^{*})|^pdx}{\left(\int_{B^+_r(-r\lambda e_n)}|u^{*}_{p_k}|^{p^{*}}dx\right)^{\frac{p}{p^{*}}}}\geq C^{-p}(\lambda,p),\end{equation}
where $C(\lambda,p)$ is the best Sobolev constant in the half-space established in \cite{CFR}.
\vskip0.2cm

Now, in order to prove the sharpness of capillary Sobolev inequality outside convex domain, it only suffices to show that $A\leq C^{-p}(\lambda,p)$. For any fixed $x_0\in\Omega$ and and sufficiently small  $\delta>0$, we have $B_{2\delta}(x_0)\subset\Omega$. Without loss of generality, we assume that $x_0=0$. Let $u_{\varepsilon}(x)=\varepsilon^{-\frac{n-p}{p}}U_{1,0}(\frac{x}{\varepsilon})$, where $U_{1,0}$ is the extremal function in the half-space. Let $0\leq\eta\leq1$ be a cut-off function such that $\eta=1$ on $B_{\delta}$ and $\eta=0$ on $\R^n\setminus B_{2\delta}$.
 Then, we have
\[\begin{aligned}
A=&\inf_{u\in W_0^{1,p}(\Omega; E^c)}\frac{\int_{\Omega}|\wF_{\lambda}(\nabla u)|^{p}dx}{\left(\int_{\Omega}|u|^{p^{*}}dx\right)^{\frac{p}{p^{*}}}}\\
\leq& \frac{\int_{B_{2\delta}\cap\R^n_+} |\wF_{\lambda}(\nabla (\eta u_{\varepsilon}))|^{p}dx}{\left(\int_{B_{2\delta}\cap\R^n_+} |\eta u_{\varepsilon}|^{p^{*}}dx\right)^{\frac{p}{p^{*}}}}\\
\leq& \frac{\int_{B_{2\delta}\cap\R^n_+}|\wF_{\lambda}(\eta\nabla  u_{\varepsilon})|^{p}dx}{\left(\int_{B_{2\delta}\cap\R^n_+} |\eta u_{\varepsilon}|^{p^{*}}dx\right)^{\frac{p}{p^{*}}}}+\frac{\int_{B_{2\delta}\cap\R^n_+}|\wF_{\lambda}(u_{\varepsilon}\nabla \eta)|^{p}dx}{\left(\int_{B_{2\delta}\cap\R^n_+}|\eta u_{\varepsilon}|^{p^{*}}dx\right)^{\frac{p}{p^{*}}}}\\
\leq& \frac{\int_{B_{2\delta}\cap\R^n_+}|\wF_{\lambda}(\nabla  u_{\varepsilon})|^{p}dx}{\left(\int_{B_{\delta}\cap\R^n_+}| u_{\varepsilon}|^{p^{*}}dx\right)^{\frac{p}{p^{*}}}}+C\delta^{-p}\frac{\int_{B_{2\delta}\cap\R^n_+}| u_{\varepsilon}|^{p}dx}{\left(\int_{B_{\delta}\cap\R^n_+}| u_{\varepsilon}|^{p^{*}}dx\right)^{\frac{p}{p^{*}}}}\\
=& \frac{\int_{B_{2\delta/\varepsilon}\cap\R^n_+}|\wF_{\lambda}(\nabla  U_{\lambda,p})|^{p}dx}{\left(\int_{B_{\delta/\varepsilon}\cap\R^n_+}| U_{\lambda,p}|^{p^{*}}dx\right)^{\frac{p}{p^{*}}}}+C\left(\frac{\varepsilon}{\delta}\right)^p\frac{\int_{B_{2\delta/\varepsilon}\cap\R^n_+}| U_{\lambda,p}|^{p}dx}{\left(\int_{B_{\delta/\varepsilon}\cap\R^n_+}|U_{\lambda,p}|^{p^{*}}dx\right)^{\frac{p}{p^{*}}}}.\\
\end{aligned}\]
Taking the limit as $\varepsilon\rightarrow0$ on both sides, we obtain
\begin{equation}\label{e4.21}A\leq \frac{\int_{\R^n_+} |\wF_{\lambda}(\nabla  U_{\lambda,p})|^{p}dx}{\left(\int_{\R^n_+}|U_{\lambda,p}|^{p^{*}}dx\right)^{\frac{p}{p^{*}}}}=\frac{\int_{\R^n_+} |F_{\lambda}(\nabla  U_{\lambda,p})|^{p}dx}{\left(\int_{\R^n_+}|U_{\lambda,p}|^{p^{*}}dx\right)^{\frac{p}{p^{*}}}}=C^{-p}(\lambda,p).\end{equation}
Combining with \eqref{e4.20} and \eqref{e4.21}, we conclude that $A=C^{-p}(\lambda,p)$,  which accomplishes the proof of Theorem \ref{theorem4.4}.
\end{proof}

From the capillary P\'{o}lya-Szeg\"{o} principle outside convex domain and the Moser Lemma, we derive the following Moser-Trudinger inequality outside  convex domain under a Neumann boundary condition.

\begin{proposition}\label{theorem1.5}
Let  $u\in W_0^{1,n}(\Omega;E^c)$ be a non-negative function such that
 \[\int_{\Omega}|\wF_{\lambda}(\nabla u)|^ndx\leq 1,\]
and satisfying the  anisotropic Neumann boundary condition:
\begin{equation}\label{e5.40}D\wF_{\lambda}(\nabla u)\cdot\nu=0 \mbox{\ on\ } \partial\Omega\cap\partial E^c.\end{equation}
Then,  it follows that
\[\int_{\Omega}\exp\left(\widetilde{\lambda}_n|u|^{\frac{n}{n-1}}\right)dx\leq C_1(n),\]
where $\widetilde{\lambda}_n=n(n\widetilde{\kappa}_n/2)^{\frac{1}{n-1}}$ and $\widetilde{\kappa}_n=|x\in\R^n: F_{\lambda}^o(x)\leq 1|$.
\end{proposition}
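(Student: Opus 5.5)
The plan is to reduce the statement, through the capillary P\'{o}lya-Szeg\"{o} principle, to a one-dimensional inequality and then close it with Moser's lemma. Under the Neumann condition \eqref{e5.40}, Theorem \ref{theorem4.1} with $p=n$ gives
\[\int_{B_r^+(-r\lambda e_n)}F_\lambda^n(\nabla u^*)\,dx\le\int_\Omega\wF_\lambda^n(\nabla u)\,dx\le 1 .\]
Since $u$ and $u^*$ are equimeasurable, we also have
\[\int_\Omega \exp\bigl(\widetilde\lambda_n u^{\frac{n}{n-1}}\bigr)\,dx=\int_{B_r^+(-r\lambda e_n)}\exp\bigl(\widetilde\lambda_n (u^*)^{\frac{n}{n-1}}\bigr)\,dx .\]
This leaves a capillary Moser--Trudinger problem for the symmetrized function on the cap $B_r^+(-r\lambda e_n)$, with $|B_r^+|=|\Omega|$.

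Next I would use that $B_r(-r\lambda e_n)$ is the Wulff ball of $F_\lambda$ centred at the origin. Consequently $u^*(x)=\phi(F_\lambda^o(x))$ for a nonincreasing $\phi$ on $[0,r]$ with $\phi(r)=0$. The identities \eqref{e2.7} give $F_\lambda(\nabla u^*)=|\phi'(\rho)|$, where $\rho=F^o_\lambda(x)$. The anisotropic co-area formula in the half-space, together with \eqref{e3.3}, shows that the level set $\{F_\lambda^o=\rho\}\cap\R^n_+$ has $F_\lambda$-perimeter $n\kappa_\lambda\rho^{n-1}$, where $\kappa_\lambda=|B_1^+(-\lambda e_n)|$ plays the role of $\widetilde\kappa_n$ restricted to the half-space. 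Hence
\[\int F_\lambda^n(\nabla u^*)\,dx=n\kappa_\lambda\int_0^r|\phi'|^n\rho^{n-1}\,d\rho,\qquad \int e^{\widetilde\lambda_n (u^*)^{n/(n-1)}}dx=n\kappa_\lambda\int_0^r e^{\widetilde\lambda_n\phi^{n/(n-1)}}\rho^{n-1}\,d\rho.\]

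Now I make Moser's change of variables $\rho=re^{-t/n}$ and set $w(t)=(n\kappa_\lambda)^{1/n}n^{(n-1)/n}\phi(re^{-t/n})$. The constraint becomes $\int_0^\infty|w'|^n\,dt\le1$ with $w(0)=0$. The exponent becomes $\widetilde\lambda_n\phi^{n/(n-1)}=w^{n/(n-1)}$ exactly when
\[\widetilde\lambda_n=n(n\kappa_\lambda)^{\frac1{n-1}},\]
and the integral turns into $|\Omega|\int_0^\infty e^{w^{n/(n-1)}-t}\,dt$. Moser's lemma bounds $\int_0^\infty e^{w^{n/(n-1)}-t}\,dt$ by a universal constant, so the inequality holds with $C_1(n)$ depending on $n$ and $|\Omega|$; $|\Omega|$ is fixed since $\Omega$ is given. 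The stated $\widetilde\lambda_n=n(n\widetilde\kappa_n/2)^{1/(n-1)}$ is then obtained by relating $\kappa_\lambda$ to $\widetilde\kappa_n$. I would check whether $\kappa_\lambda=\widetilde\kappa_n/2$, or otherwise note that any constant not exceeding the sharp one $n(n\kappa_\lambda)^{1/(n-1)}$ is allowed.

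I expect the main obstacle to be the bookkeeping of the constants, namely identifying the $F_\lambda$-perimeter of the Wulff cap with $n\kappa_\lambda\rho^{n-1}$ and comparing $\kappa_\lambda$ with $\widetilde\kappa_n$. The first I would handle via Proposition \ref{proposition3.1} and \eqref{e3.3} applied to $\lambda$-scaled caps, using homogeneity. The rest is the standard Moser argument and should go through routinely.
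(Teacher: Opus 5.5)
Your route is the paper's: the P\'{o}lya--Szeg\"{o} inequality (Theorem~\ref{theorem4.1}) with $p=n$, equimeasurability, reduction to a profile $\phi(F^o_\lambda)$ on the cap, then Moser's substitution and Moser's lemma. Your substitution is correct. So is your remark that the constant must carry a factor $|\Omega|$, since the left-hand side is at least $|\Omega|$. The gap is the step you deferred, and it does not close. Here $\widetilde\kappa_n=|\{F^o_\lambda\le1\}|=|B_1(-\lambda e_n)|$ is the volume of the whole unit ball, while $\kappa_\lambda=|B_1(-\lambda e_n)\cap\R^n_+|$. So $\kappa_\lambda=\widetilde\kappa_n/2$ only for $\lambda=0$, and $\kappa_\lambda<\widetilde\kappa_n/2$ for $\lambda\in(0,1)$, because the centre $-\lambda e_n$ lies below $\partial\R^n_+$. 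For $\lambda>0$ the stated exponent $n(n\widetilde\kappa_n/2)^{\frac{1}{n-1}}$ is therefore strictly larger than $n(n\kappa_\lambda)^{\frac{1}{n-1}}$, and your fallback does not apply. Moser's lemma is sharp for the one-dimensional problem you reduce to (test $w(t)=\min\{t,T\}\,T^{-1/n}$ with $T\to\infty$), so no refinement of this route reaches the stated exponent. The paper obtains $\widetilde\kappa_n/2$ only by giving $\{F^o_\lambda=\rho\}\cap\R^n_+$ the weight $\frac{n\widetilde\kappa_n}{2}\rho^{n-1}$, i.e.\ by treating the cap as half of the Wulff ball, which is valid only at $\lambda=0$. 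In the proof of Lemma~\ref{lemma3.1} the paper itself uses your weight $n\kappa_\lambda\rho^{n-1}=\frac{d}{d\rho}|B^+_\rho(-\rho\lambda e_n)|$. So, with the orientation your identity $F_\lambda(\nabla u^*)=|\phi'|$ implicitly uses, your argument proves the inequality with exponent $n(n\kappa_\lambda)^{\frac{1}{n-1}}$. That implies the stated one only for $\lambda\le0$.

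That identity is itself wrong as written, an error shared with the paper, whose preliminaries assume $F(tx)=|t|F(x)$; this is false for $F_\lambda$. Since $u^*$ decreases in $\rho$, we have $\nabla u^*=-|\phi'|\nabla F^o_\lambda$, and \eqref{e2.7} only gives $F_\lambda(\nabla u^*)=|\phi'|\,F_\lambda(-\nabla F^o_\lambda)$. On $\{F^o_\lambda=1\}$, with $\nu=x+\lambda e_n$, one has $\nabla F^o_\lambda=\nu/(1-\lambda\nu_n)$, hence $F_\lambda(-\nabla F^o_\lambda)=\frac{1+\lambda\nu_n}{1-\lambda\nu_n}\neq1$ for $\lambda\neq0$. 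The reduction is exact if the gauges are evaluated at outer normals, i.e.\ with $\wF_\lambda(-\nabla u)$ and $F_\lambda(-\nabla u^*)$ throughout. This is also the orientation for which the co-area formula produces the capillary energies $P_\lambda$ of the super-level sets, via Proposition~\ref{proposition3.1}. If the hypothesis $\int_\Omega\wF_\lambda^n(\nabla u)\,dx\le1$ is read literally, the level sets are measured instead by $P_{-\lambda}$ and the comparison caps become $B^+_\rho(\rho\lambda e_n)$. The same computation then yields the exponent $n(n\kappa_{-\lambda})^{\frac{1}{n-1}}$ with $\kappa_{-\lambda}=|B_1(\lambda e_n)\cap\R^n_+|$, which covers the stated constant for $\lambda\ge0$ instead. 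Either way, a fixed orientation gives the stated $\widetilde\lambda_n$ for only one sign of $\lambda$. You should fix the convention explicitly and state the exponent as $n(n\kappa_{\pm\lambda})^{\frac{1}{n-1}}$ accordingly.
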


\begin{proof}
Since $u$ satisfies the Neumann boundary condition \eqref{e5.40}, the capillary P\'{o}lya-Szeg\"{o} inequality outside convex domain implies that
\[\int_{B_r^+(-r\lambda e_n)}|F_{\lambda}(\nabla u^{*})|^ndx\leq\int_{\Omega}|\wF_{\lambda}(\nabla u)|^ndx\leq1.\]
On the other hand,
\[\begin{aligned}
\int_{B_r^+(-r\lambda e_n)}|u^{*}(F_{\lambda}^o(x))|^ndx=&\int_0^r\int_{\{F_{\lambda}^o(x)=\rho\}\cap\R^n_+}|(u^{*})'(\rho)|^n d\HR^{n-1}d\rho\\
=&\frac{n\widetilde{\kappa}_n}{2}\int_0^r|(u^{*})'(\rho)|^n \rho^{n-1}d\rho,
\end{aligned}\]
which implies that
\[w_{n-1}\int_0^{r}|(u^{*})'(\rho)|^n \rho^{n-1}d\rho\leq\frac{2w_{n-1}}{n\widetilde{\kappa}_n}.\]
From Moser's Lemma, we obtain
\begin{equation}\label{e5.41}
w_{n-1}\int_0^r\exp\left(\widetilde{\lambda}_n |u^{*}(\rho)|^{\frac{n}{n-1}}\rho^{n-1}\right)d\rho\leq C(n),
\end{equation}
where %$\widetilde{\lambda}_n=nw_{n-1}^{\frac{1}{n-1}}\cdot\left(\frac{2w_{n-1}}{n\widetilde{\kappa}_n}\right)^{-\frac{1}{n-1}}=n(n\widetilde{\kappa}_n/2)^{\frac{1}{n-1}}$.
$\widetilde{\lambda}_n=n(n\widetilde{\kappa}_n/2)^{\frac{1}{n-1}}$.
Therefore,
\[\begin{aligned}
\int_{\Omega}\exp\left(\widetilde{\lambda}_n|u|^{\frac{n}{n-1}}\right)dx=&\int_{B_r^+(-r\lambda e_n)}\exp\left(\widetilde{\lambda}_n|u^{*}(F_{\lambda}^o(x))|^{\frac{n}{n-1}}\right)dx\\
=&\int_0^r\int_{\{F_{\lambda}^o(x)=\rho\}\cap\R^n_+}\exp\left(\widetilde{\lambda}_n|u^{*}(\rho)|^{\frac{n}{n-1}}\right)d\HR^{n-1}d\rho\\
=&\frac{n\widetilde{\kappa}_n}{2}\int_0^r\exp\left(\widetilde{\lambda}_n|u^{*}(\rho)|^{\frac{n}{n-1}}\right)\rho^{n-1}d\rho\leq C_1(n),
\end{aligned}\]
where we used   \eqref{e5.41} in the last inequality and $v_n$ denotes the volume of the unit ball. Then, we complete the proof.
\end{proof}

By a subcritical approximation procedure and establishing the subcritical capillary Moser-Trudinger inequality in  $W_{0}^{1,n}(\Omega;E^c)$, the Neumann boundary condition \eqref{e5.40} of Proposition \ref{theorem1.5} can be removed.

\begin{proof}[\bf{Proof of the Theorem \ref{theorem1.6}}]
 The proof is achieved   through several steps.

\medskip
Step 1. We first claim that
\[\sup_{\int_{\Omega}|\wF_{\lambda} (\nabla u)|^ndx\leq1}\exp\left((\widetilde{\lambda}_n-\varepsilon_k)|u|^{\frac{n}{n-1}}\right)dx<\infty\]
for any $u\in W_{0}^{1,n}(\Omega;E^c)$ and $\varepsilon_k$ small.

\medskip
Assume by contradiction that there exists a sequence  $\{u_k\}\in W_{0}^{1,n}(\Omega;E^c)$ with $\int_{\Omega}|\wF_{\lambda} (\nabla u_k)|^ndx\leq1$ such that
\begin{equation}\label{e6.2}\int_{\Omega}\exp\left((\widetilde{\lambda}_n-\varepsilon_k)|u_k|^{\frac{n}{n-1}}\right)dx=\infty.\end{equation}

If $u_k\rightharpoonup 0$ weakly, we take a non-negative cut-off  function $0\leq\phi(x)\leq1$ belongs to $C_c^{\infty}(B_{\delta}(x_0))$ for any $x_0\in\Omega$, with $\delta$ sufficiently small so that $\phi(x)=1\mbox{\ in\ }B_{\frac{\delta}{2}}(x_0)$. Define
\[\wF_{\lambda}(\xi,x_0)=|\xi|-\nabla h(x_0)\cdot\xi.\]
Then we have
\[\begin{split}
\int_{B_{\delta}(x_0)}\left|\wF_{\lambda}\left(\nabla(u_k \phi)\right)\right|^ndx
=&\int_{B_{\delta}(x_0)}\left|\wF_{\lambda}\left(\phi\nabla u_k +u_k\nabla\phi\right)\right|^ndx\\
=&\int_{B_{\delta}(x_0)}\left|\wF_{\lambda}\left(\phi\nabla u_k +u_k\nabla\phi,x_0\right)\right|^ndx+o(\delta)\\
\leq& \int_{B_{\delta}(x_0)}\left|\wF_{\lambda}\left(\nabla u_k,x_0\right)\right|^n \phi^ndx+\int_{B_{\delta}(x_0)}\left|\wF_{\lambda}\left(\nabla\phi,x_0\right)\right|^n|u_k|^ndx+o(\delta)\\
\leq&1+\eta
\end{split}\]
for $\eta$ small. Setting
\[\left|\{\wF_{\lambda}^o(\xi,x_0)\leq1\}\right|=\kappa_{n,x_0},\]
 using a normalization argument, the anisotropic Moser-Trudinger inequality yields
\[\int_{B_{\frac{\delta}{2}}(x_0)}\exp\left(\frac{\hat{\lambda}_n}{(1+\eta)^{\frac{1}{n-1}}}|u_k|^{\frac{n}{n-1}}\right)dx<\infty,\]
where $\hat{\lambda}_n=n\left({n\hat{\kappa}_{n,x_0}}\right)^{\frac{1}{n-1}}$. Since \[\frac{\hat{\lambda}_n}{(1+\eta)^{\frac{1}{n-1}}}>\widetilde{\lambda}_n-\varepsilon_k,\]
the arbitrariness of $x_0$ implies that
\[\int_{\Omega}\exp\left((\widetilde{\lambda}_n-\varepsilon_k)|u_k|^{\frac{n}{n-1}}\right)dx
<\int_{\Omega}\exp\left(\frac{\hat{\lambda}_n}{(1+\eta)^{\frac{1}{n-1}}}|u_k|^{\frac{n}{n-1}}\right)dx<\infty,\]
which is a contradiction with \eqref{e6.2}.

\medskip
For any $x_0 \in \partial \Omega$, we extend $u_k$ to be an even function in $B_\delta(x_0)$. Arguing as before, we obtain
\[\int_{B_{\delta}(x_0)}\left|\wF_{\lambda}\left(\nabla(u_k \phi)\right)\right|^ndx\leq2(1+\eta),\]
which yields
\[\int_{B_{\frac{\delta}{2}}(x_0)}\exp\left(\frac{\hat{\lambda}_n}{(2(1+\eta))^{\frac{1}{n-1}}}|u_k|^{\frac{n}{n-1}}\right)dx<\infty.\]
This again results in a contradiction.

\medskip
Now consider the case where $u_k\rightharpoonup u$ weakly with $u\not\equiv0$,
\[\begin{split}
\int_{\Omega}\exp\left((\widetilde{\lambda}_n-\varepsilon_k)|u_k|^{\frac{n}{n-1}}\right)dx\leq&\int_{\Omega}\exp
\left((\widetilde{\lambda}_n-\varepsilon_k)(1+\sigma)|u_k-u|^{\frac{n}{n-1}}+(\widetilde{\lambda}_n-\varepsilon_k)C_{\sigma}|u|^{\frac{n}{n-1}}\right)dx\\
\leq&\left(\int_{\Omega}\exp\left((\widetilde{\lambda}_n-\varepsilon_k)(1+\sigma)p|u_k-u|^{\frac{n}{n-1}}\right)dx\right)^{\frac{1}{p}}
\left(\int_{\Omega}\exp\left((\widetilde{\lambda}_n-\varepsilon_k)C_{\sigma}q|u|^{\frac{n}{n-1}}\right)dx\right)^{\frac{1}{q}},
\end{split}\]
where we choose $p>1$ sufficiently close to 1  such that  $p$ and $q$ are conjugate indices. Consequently, \eqref{e6.2} yields that
\begin{equation}\label{e6.3}
\lim_{k\rightarrow\infty}\int_{\Omega}\exp\left((\widetilde{\lambda}_n-\varepsilon_k)(1+\sigma)p|u_k-u|^{\frac{n}{n-1}}\right)dx=\infty.
\end{equation}

Taking the same cut-off function $\phi$ as before, we estimate
\[\begin{split}
\int_{B_{\delta}(x_0)}\left|\wF_{\lambda}\left(\nabla(u_k-u)\right)\right|^ndx
\leq\int_{B_{\delta}(x_0)}\left|\wF_{\lambda}\left(\nabla u_k \right)\right|^ndx+\int_{B_{\delta}(x_0)}\left|\wF_{\lambda}\left(\nabla u\right)\right|^ndx
\leq1+\eta
\end{split}\]
for $\eta$ sufficiently small. Moreover,
\[\begin{split}
\int_{B_{\delta}(x_0)}\left|\wF_{\lambda}\left(\nabla((u_k-u)\phi)\right)\right|^ndx
=&\int_{B_{\delta}(x_0)}\left|\wF_{\lambda}\left(\phi\nabla (u_k-u) +(u_k-u)\nabla\phi\right)\right|^ndx\\
=&\int_{B_{\delta}(x_0)}\left|\wF_{\lambda}\left(\phi\nabla (u_k-u) +(u_k-u)\nabla\phi,x_0\right)\right|^ndx+o(\delta)\\
\leq& \int_{B_{\delta}(x_0)}\left|\wF_{\lambda}\left(\nabla (u_k-u),x_0\right)\right|^n \phi(x)^ndx+\int_{B_{\delta}(x_0)}\left|\wF_{\lambda}\left(\nabla\phi,x_0\right)\right|^n|u_k-u|^ndx+o(\delta)\\
\leq&1+2\eta.
\end{split}\]
Then, applying the anisotropic Moser-Trudinger inequality, we obtain
\[\int_{B_{\frac{\delta}{2}}(x_0)}\exp\left(\frac{\hat{\lambda}_n}{(1+2\eta)^{\frac{1}{n-1}}}|u_k-u|^{\frac{n}{n-1}}\right)dx<\infty,\]
where $\hat{\lambda}_n=n\left({n\hat{\kappa}_{n,x_0}}\right)^{\frac{1}{n-1}}$. Now, choose $p > 1$ sufficiently close to 1  and $\sigma > 0$ small enough, we have
 \[\frac{\hat{\lambda}_n}{(1+2\eta)^{\frac{1}{n-1}}}>(\widetilde{\lambda}_n-\varepsilon_k)(1+\delta)p\quad\text{for all large }k.\]
 This inequality contradicts the \eqref{e6.3}.

\medskip
The argument for boundary points $x_0 \in \partial\Omega$ is analogous. We extend $u_k$ and $u$ to $B_\delta(x_0)$ as described previously. Using the same cut-off function $\phi$, we then obtain the energy estimate
\[\int_{B_{\delta}(x_0)}\left|\wF_{\lambda}\left(\nabla((u_k-u) \phi)\right)\right|^ndx\leq2(1+2\eta).\]
Applying the anisotropic Moser-Trudinger inequality again, we have
\[\int_{B_{\frac{\delta}{2}}(x_0)}\exp\left(\frac{\hat{\lambda}_n}{(2(1+2\eta))^{\frac{1}{n-1}}}|u_k|^{\frac{n}{n-1}}\right)dx<\infty.\]
However, with $p>1$ chosen sufficiently close to $1$ and $\sigma>0$ small, we again have
\[\frac{\hat{\lambda}_n}{2(1+2\eta)^{\frac{1}{n-1}}}>(\widetilde{\lambda}_n-\varepsilon_k)(1+\delta)p\]
for all large $k$. Using again the arbitrariness of $x_0$, we obtain
\[\int_{\Omega}\exp\left((\widetilde{\lambda}_n-\varepsilon_k)(1+\sigma)p|u_k-u|^{\frac{n}{n-1}}\right)dx
<\int_{\Omega}\exp\left(\frac{\hat{\lambda}_n}{(2(1+2\eta))^{\frac{1}{n-1}}}|u_k|^{\frac{n}{n-1}}\right)dx<\infty,\]
which contradicts with \eqref{e6.3}.

\medskip
Step 2. For any $u\in W_0^n(\Omega;E^c)$, define
\[I_{\varepsilon}=\sup_{\int_{\Omega}\left|\wF_{\lambda}(\nabla u)\right|^ndx\leq 1}\int_{\Omega}\exp\left((\widetilde{\lambda}_n-\varepsilon)|u|^{\frac{n}{n-1}}\right)dx.\]
We claim that for every $\varepsilon>0$, $I_{\varepsilon}$ is attained by some function $u_{\varepsilon}$.

\medskip
Let $\{u_j\}$ be a maximizing sequence for $I_{\varepsilon}$, i.e.,
\[\int_{\Omega}\left|\wF_{\lambda}(\nabla u_j)\right|^ndx\leq 1,\quad u_j=0\text{ on }\partial\Omega\cap E^c\]
and
\[\lim_{j\rightarrow\infty}\int_{\Omega}\exp\left((\widetilde{\lambda}_n-\varepsilon)|u_j|^{\frac{n}{n-1}}\right)= I_{\varepsilon}.\]
Since the sequence ${u_j}$ is bounded in $W_0^n(\Omega; E^c)$, there exists a function $u_\varepsilon \in W_0^n(\Omega; E^c)$ and a subsequence (still denoted by ${u_j}$) such that
\[u_j\rightharpoonup u_{\varepsilon} \text{ weakly in }W_0^n(\Omega; E^c)\quad \text{and}\quad u_j\rightarrow u_{\varepsilon} \text{ a.e. in } \Omega.\]
Applying the result of Step 1, we obtain
\[\int_{\Omega}\exp\left(\widetilde{\lambda}_n-\frac{\varepsilon}{2}|u_j|^{\frac{n}{n-1}}\right)dx<\infty.\]
Consequently, the sequence $\left\{\exp\left(\widetilde{\lambda}_n-\frac{\varepsilon}{2}|u_j|^{\frac{n}{n-1}}\right)\right\}$  is uniformly integrable. Together with the almost everywhere convergence, Vitali's convergence theorem implies
\[\lim_{j\rightarrow\infty}\int_{\Omega}\exp\left(\widetilde{\lambda}_n-\varepsilon|u_j|^{\frac{n}{n-1}}\right)dx
=\int_{\Omega}\exp\left(\widetilde{\lambda}_n-\varepsilon|u_{\varepsilon}|^{\frac{n}{n-1}}\right)dx.\]
Therefore, $I_\varepsilon=\int_{\Omega}\exp\left((\widetilde{\lambda}_n-\varepsilon)|u_{\varepsilon}|^{\frac{n}{n-1}}\right)dx$, and $u_\varepsilon$ is indeed an extremal function for $I_\varepsilon$.

\medskip
Step 3. We now verify that the extremal function $u_{\varepsilon}$ automatically satisfies the Neumann boundary condition.
%By the standard variational method, $u_{\varepsilon}$ is the critical point of the following functional $J(u)$ for any $\lambda\in\R$:
%\[J(u)=\frac{n}{n-1}\int_{\Omega}|u|^{\frac{n}{n-1}}\exp\left((\widetilde{\lambda}_n-\varepsilon)|u|^{\frac{n}{n-1}}\right)dx
%+n\lambda\int_{\Omega}\div\left(|\wF_{\lambda}(\nabla u)|^{n-1} D\wF_{\lambda}(\nabla u)\right)udx.\]
Define
\[I(u)=\int_{\Omega}\exp\left(\widetilde{\lambda}_n-\varepsilon|u|^{\frac{n}{n-1}}\right)dx\quad\text{and}\quad G(u)=\int_{\Omega}\left|\wF_{\lambda}(\nabla u)\right|^ndx.\]
By standard variational arguments, $u_{\varepsilon}$ satisfies the Euler-Lagrange equation associated with the constrained maximization problem, namely,
\[\left<I'(u_{\varepsilon}),\phi\right>=\lambda \left<G'(u_{\varepsilon}),\phi\right>\quad\text{for any } \phi\in C_0^{\infty}(\Omega) \]
where $\lambda \in \mathbb{R}$ is a Lagrange multiplier. Explicitly, this reads
%By the standard variational method, $u_{\varepsilon}$ satisfies the Euler-Lagrange equation $\left<I'(u),\phi\right>=\left<G'(u),\phi\right>$ for any $\phi\in C_0^{\infty}(\Omega)$ and $\lambda\in\R$, that is
%\[I'(u)=\frac{n}{n-1}|u|^{\frac{1}{n-1}}\exp\left((\widetilde{\lambda}_n-\varepsilon)|u|^{\frac{n}{n-1}}\right)\quad\text{and}\quad G'(u)=-n\div\left(|\wF_{\lambda}(\nabla u)|^{n-1} D\wF_{\lambda}(\nabla u)\right).\]
%\[\frac{n}{n-1}|u_{\varepsilon}|^{\frac{1}{n-1}}\exp\left((\widetilde{\lambda}_n-\varepsilon)|u_{\varepsilon}|^{\frac{n}{n-1}}\right)=-n\lambda\div\left(|\wF_{\lambda}(\nabla u_{\varepsilon})|^{n-1} D\wF_{\lambda}(\nabla u_{\varepsilon})\right).\]
\begin{equation}\label{e6.4}\begin{split}&\int_{\Omega}\frac{n}{n-1}|u_{\varepsilon}|^{\frac{1}{n-1}}\exp\left((\widetilde{\lambda}_n-\varepsilon)|u_{\varepsilon}|^{\frac{n}{n-1}}\right)\phi(x)dx\\
=&-n\lambda\int_{\Omega}\div\left(|\wF_{\lambda}(\nabla u_{\varepsilon})|^{n-1} D\wF_{\lambda}(\nabla u_{\varepsilon})\right)\phi(x)dx\\
=&n\lambda\int_{\Omega}\left|\wF_{\lambda}(\nabla u_{\varepsilon})\right|^{n-1}D\wF_{\lambda}(\nabla u_{\varepsilon})\cdot D\phi dx.
\end{split}\end{equation}
Now we take the test function $\varphi\in C^{\infty}(\Omega)$ satisfying $\varphi=0$ on $\partial\Omega\cap E^c$, then we have
\begin{equation}\label{e6.5}\begin{split}&\int_{\Omega}\frac{n}{n-1}|u_{\varepsilon}|^{\frac{1}{n-1}}\exp\left((\widetilde{\lambda}_n-\varepsilon)|u_{\varepsilon}|^{\frac{n}{n-1}}\right)\varphi(x)dx\\
=&-n\lambda\int_{\Omega}\div\left(|\wF_{\lambda}(\nabla u_{\varepsilon})|^{n-1} D\wF_{\lambda}(\nabla u_{\varepsilon})\right)\varphi(x)dx\\
=&n\lambda\int_{\Omega}\left|\wF_{\lambda}(\nabla u_{\varepsilon})\right|^{n-1}D\wF_{\lambda}(\nabla u_{\varepsilon})\cdot D\varphi dx-n\lambda\int_{\partial\Omega\cap\partial E^c}\left|\wF_{\lambda}(\nabla u_{\varepsilon})\right|^{n-1}D\wF_{\lambda}(\nabla u_{\varepsilon})\cdot\nu\varphi dx.\\
\end{split}\end{equation}
Combining \eqref{e6.4} and \eqref{e6.5}, we deduce that
%\[n\lambda\int_{\partial\Omega\cap\partial E^c}\left|\wF_{\lambda}(\nabla u_{\varepsilon})\right|^{n-1}D\wF_{\lambda}(\nabla u_{\varepsilon})\cdot\nu\varphi dx=0\]
%for any $\lambda\in\R$ and $\varphi\in C^{\infty}(\Omega)$ with $\varphi=0$ on $\partial\Omega\cap E^c$.
\[D\wF_{\lambda}(\nabla u_{\varepsilon})\cdot\nu=0\quad\text{on}\quad \partial\Omega\cap\partial E^c.\]

\medskip
Step 4.
%For any $u\in W_0^{1,n}(\Omega;E^c)$ with $\int_{\Omega}|\wF_{\lambda}(\nabla u)|^ndx=1$,
%\[\begin{split}
%\int_{\Omega}\exp\left(\widetilde{\lambda}_n|u|^{\frac{n}{n-1}}\right)dx=&\lim_{\varepsilon\rightarrow0}\int_{\Omega}\exp\left((\widetilde{\lambda}_n-\varepsilon)|u|^{\frac{n}{n-1}}\right)dx\\
%\leq&\lim_{\varepsilon\rightarrow0}\int_{\Omega}\exp\left((\widetilde{\lambda}_n-\varepsilon)|u|^{\frac{n}{n-1}}\right)dx<\infty.
%\end{split}\]
We have shown that in the subcritical case,
%\[\sup_{\int_{\Omega}|\wF_{\lambda} (\nabla u)|^ndx\leq1}\exp\left((\widetilde{\lambda}_n-\varepsilon_k)|u|^{\frac{n}{n-1}}\right)dx\]
$I_{\varepsilon}$ is attained by an extremal function $u_{\varepsilon}$,  which automatically satisfies the Neumann boundary condition \eqref{e5.40}. To approximate the critical case, we first claim that
\begin{equation}\label{e6.1}\lim_{\varepsilon\rightarrow0}\int_{\Omega}\exp\left((\widetilde{\lambda}_n-\varepsilon)|u_{\varepsilon}|^{\frac{n}{n-1}}\right)dx=\sup_{\int_{\Omega}|\wF_{\lambda} (\nabla u)|^ndx\leq1}\int_{\Omega}\exp\left(\widetilde{\lambda}_n|u|^{\frac{n}{n-1}}\right)dx.\end{equation}
  Indeed, for any function $u$ satisfies $\int_{\Omega}|\wF_{\lambda} (\nabla u)|^ndx\leq1$, we have
\[\begin{aligned}
\int_{\Omega}\exp\left(\widetilde{\lambda}_n|u|^{\frac{n}{n-1}}\right)dx
=&\lim_{\varepsilon\rightarrow0}\int_{\Omega}\exp\left((\widetilde{\lambda}_n-\varepsilon)|u|^{\frac{n}{n-1}}\right)dx\\
\leq&\lim_{\varepsilon\rightarrow0}\int_{\Omega}\exp\left((\widetilde{\lambda}_n-\varepsilon)|u_{\varepsilon}|^{\frac{n}{n-1}}\right)dx.
\end{aligned}\]
Taking the supremum over all such $u$ on the left-hand side yields
\[\sup_{\int_{\Omega}|\wF_{\lambda} (\nabla u)|^ndx\leq1}\int_{\Omega}\exp\left(\widetilde{\lambda}_n|u|^{\frac{n}{n-1}}\right)dx\leq\lim_{\varepsilon\rightarrow0}\int_{\Omega}\exp\left((\widetilde{\lambda}_n-\varepsilon)|u_{\varepsilon}|^{\frac{n}{n-1}}\right)dx.\]
Since the reverse inequality follows directly from the definition of $I_\varepsilon$, we obtain \eqref{e6.1}.

\medskip
On the other hand, since each $u_{\varepsilon}$ satisfies the Neumann boundary condition, we may apply Proposition  \ref{theorem1.5} to obtain
\[\int_{\Omega}\exp\left(\widetilde{\lambda}_n|u_{\varepsilon}|^{\frac{n}{n-1}}\right)dx<\infty,\]
where $\widetilde{\lambda}_n=n(n\widetilde{\kappa}_n/2)^{\frac{1}{n-1}}$ and $\widetilde{\kappa}_n=|x\in\R^n: F_{\lambda}^o(x)\leq 1|$.
Consequently,
\[\int_{\Omega}\exp\left(\widetilde{\lambda}_n|u|^{\frac{n}{n-1}}\right)dx
=\lim_{\varepsilon\rightarrow0}\int_{\Omega}\exp\left((\widetilde{\lambda}_n-\varepsilon)|u_{\varepsilon}|^{\frac{n}{n-1}}\right)dx
\leq \int_{\Omega}\exp\left(\widetilde{\lambda}_n|u_{\varepsilon}|^{\frac{n}{n-1}}\right)dx<\infty.\]
\end{proof} Then we accomplish the proof of Theorem \ref{theorem1.6}.

\section{Proof of the Theorem \ref{theorem4.2},  Theorem \ref{corollary4.1} }
  In this  section, we establish a capillary Talenti comparison principle and a capillary Bossel-Daners inequality outside convex domain.  We begin with the following two lemmas.

\bl\label{lemma3.1}
Let $u\in L^{p}(\Omega)$ for $1\leq p\leq\infty$. Under the same notations and hypotheses as in Theorem \ref{theorem4.2}, we get
\be\label{e5.16}
\|u\|_{L^p(\Omega)}\leq\|v\|_{L^p(B^+_r(-r\lambda e_n))}\quad\mbox{for all\ }1\leq p\leq\infty,
\ee
and
\be\label{e5.17}
\|\wF_{\lambda}(\nabla u)\|_{L^q(\Omega)}\leq\|F_{\lambda}(\nabla v)\|_{L^q(B^+_r(-r\lambda e_n))}\quad\mbox{for \ }1\leq q\leq 2.
\ee
\el
\bp
If $p=\infty$, then the result is contained in the definition of the capillary Schwarz rearrangement. Assume that $1\leq p<\infty$. Since $u$ and $u^{*}$ are equimeasurable, i.e.
\[|\{u>t\}|=|\{u^{*}>t\}|,\]
then by the Fubini's theorem, $u$ and $u^{*}$ have the same $L^p$ norms.  Applying the capillary Talenti comparison principle, we have
\[\int_{\Omega}|u|^pdx=\int_{B^+_r(-r\lambda e_n)}|u^{*}|^pdx\leq \int_{B^+_r(-r\lambda e_n)}|v|^pdx.\]

\noindent Moreover, let $1\leq q\leq 2$ and $t>0$. By H\"{o}lder's inequality, we have
\[\frac{1}{h}\int_{\{t<u\leq t+h\}}|\wF_{\lambda}(\nabla u)|^q dx\leq\left(\frac{1}{h}\int_{\{t<u\leq t+h\}}|\wF_{\lambda}(\nabla u)|^2 dx\right)^{\frac{q}{2}}\left(\frac{1}{h}\int_{\{t<u\leq t+h\}}dx\right)^{\frac{2-q}{2}}.\]
Letting $h\rightarrow 0$ and using \eqref{e4.5}, we obtain
\[\begin{split}
-\frac{d}{dt}\int_{\{u>t\}}|\wF_{\lambda}(\nabla u)|^q dx\leq&\left(-\frac{d}{dt}\int_{\{u>t\}}|\wF_{\lambda}(\nabla u)|^2 dx\right)^{\frac{q}{2}}(-\mu'(t))^{\frac{2-q}{2}}\\
\leq&G(\mu(t))^{\frac{q}{2}}(-\mu'(t))^{\frac{2-q}{2}}.
\end{split}\]
Assume that $M$ is the maximum value of $u$ in $\Omega$. Integrating the above inequality from $0$ to $M$, we have
\[\begin{split}
\int_{\Omega}|\wF_{\lambda}(\nabla u)|^q dx\leq&\int_0^{M}[G(\mu(t))(-\mu'(t))^{-1}]^{\frac{q}{2}}(-\mu'(t))dt\\
\leq&\int_0^{M}\left[(n\kappa_{\lambda}^{\frac{1}{n}})^{-2}\mu(t)^{\frac{2}{n}-2}G^2(\mu(t))\right]^{\frac{q}{2}}(-\mu'(t))dt,
\end{split}\]
where we used the \eqref{e4.7} in the last inequality. Therefore, by the change of variable $\xi=\mu(t)$, we obtain
\be\label{e5.19}
\int_{\Omega}|\wF_{\lambda}(\nabla u)|^q dx\leq\int_0^{|\Omega|}\left[(n\kappa_{\lambda}^{\frac{1}{n}})^{-1}\xi^{\frac{1}{n}-1}G(\xi)\right]^q d\xi.
\ee
Moreover, by introducing polar coordinates and employing the explicit form of  $v$ given in \eqref{e5.18}, we derive
\[\begin{split}\int_{B^+_r(-r\lambda e_n)}|F_{\lambda}(\nabla v)|^qdx=&\int_0^r|v'(\rho)|^qn\kappa_{\lambda}\rho^{n-1}d\rho\\
=&\int_0^r|(n\kappa_{\lambda})^{-1}\rho^{1-n}G(\kappa_{\lambda}\rho^n)|^q n\kappa_{\lambda}\rho^{n-1}d\rho.\end{split}\]
Setting $\zeta=\kappa_{\lambda}\rho^n$ yields
\[\begin{split}
\int_{B^+_r(-r\lambda e_n)}|F_{\lambda}(\nabla v)|^qdx=&\int_{0}^{\kappa_{\lambda}r^n}\left[(n\kappa_{\lambda})^{-1}(\zeta\kappa_{\lambda}^{-1})^{\frac{1-n}{n}}G(\zeta)\right]^qd\zeta\\
=&\int_0^{|\Omega|}\left[(n\kappa_{\lambda}^{\frac{1}{n}})^{-1}\zeta^{\frac{1}{n}-1}G(\zeta)\right]^qd\zeta,
\end{split}\]
where the right-hand side coincides with that of \eqref{e5.19}. This completes the proof.
\ep

\bl\label{lemma3.2}
Let $f>0$. Equality holds in either inequality \eqref{e5.16} or \eqref{e5.17} if and only if $u^{*}=v$ almost everywhere. Thus, either we have equality in all these inequalities or all of these inequalities are strict.
\el
\bp
Step 1. For $1\leq p<\infty$,  $u$ and $u^{*}$ have the same $L^p$ norms  and satisfy the inequality $0\leq u^{*}\leq v$. Consequently, the norm equality $\|u\|_{L^p(\Omega)}=\|v\|_{L^p(B^+_r(-r\lambda e_n))}$ implies that $u^{*}=v$ almost everywhere. And the converse is trivially true in this case.

\medskip
Step 2. For $p=\infty$.
Assume that  $\mu$ and $\nu$ represent the distribution function of $u$ and $v$ respectively. Since $u^{*}\leq v$, then
\[\mu(t)=|\{u^{*}>t\}|\leq|\{v>t\}|=\nu(t)\]
for any $t>0$. From \eqref{e4.10}, we have
\[t=\left(n\kappa_{\lambda}^{\frac{1}{n}}\right)^{-2}\int_{\nu(t)}^{|\Omega|}\xi^{\frac{2}{n}-2}G(\xi)d\xi.\]
Differentiating both sides with respect to t simultaneously, we have
\[1=\left(n\kappa_{\lambda}^{\frac{1}{n}}\right)^{-2}(\nu(t))^{\frac{2}{n}-2}G(\nu(t))(-\nu'(t)).\]
Combining this with \eqref{e4.7}, we get
\[K(\mu(t))\mu'(t)\leq K(\nu(t))\nu'(t),\]
where $K(t)=\left(n\kappa_{\lambda}^{\frac{1}{n}}\right)^{-2}t^{\frac{2}{n}-2}$. If $\widetilde{K}$ is a primitive of $K$ and set $\eta=\widetilde{K}(\nu(t))-\widetilde{K}(\mu(t))$, then the above inequality can be rewritten as
\[\frac{d}{dt}(\eta(t))\geq 0.\]
 Set $M=\|u\|_{L^{\infty}(\Omega)}=\|v\|_{L^{\infty}(B^+_r(-r\lambda e_n))}$. We have established that the function $\eta(t)$ is non-increasing on the interval  $[0,M]$. Moreover, since $\eta(0)=\eta(M)=0$, it follows that $\eta$ vanishes identically on $[0,M]$. The non-negativity of $K$ ensures that $\widetilde{K}$ is strictly increasing. Indeed, if  $\widetilde{K}$ were not strictly increasing, it would follow that $G\equiv0$  and consequently $f=0$, which is a contradiction with our assumption. Therefore, the identity $\eta\equiv0$ indicates that $\mu(t)=\nu(t)$ for any $t\in[0,M]$. As a consequence,  $u^{*}$ and $v$ are equimeasurable, and so $u^{*}=v$ almost everywhere. The converse implication follows immediately from the definitions.

\medskip
Step 3. For $1\leq q\leq 2$. Assume that $\|\wF_{\lambda}(\nabla u)\|_{L^q(\Omega)}\leq\|F_{\lambda}(\nabla v)\|_{L^q(B^+_r(-r\lambda e_n))}$. Getting back to the proof of Lemma \ref{lemma3.2}, all the inequalities hold as equalities, and then
\[1=\left(n\kappa_{\lambda}^{\frac{1}{n}}\right)^{-2}(\mu(t))^{\frac{2}{n}-2}G(\mu(t))(-\mu'(t)).\]
This identity immediately implies that $\eta'(t)=0$ for all $t\in[0,M]$. Repeating the similar proof from the previous step, we conclude that $u^{*}=v$ almost everywhere.

\noindent Conversely, suppose $u^{*}=v$ holds. Combining this equality with the P\'{o}lya-Szeg\"{o} principle of the convex symmetrization outside a convex cylinder $E$ (cf. Theorem \ref{theorem4.1}) and the inequality \eqref{e5.17}, we deduce that
\[\|F_{\lambda}(\nabla v)\|_{L^q(B^+_r(-r\lambda e_n))}=\|F_{\lambda}(\nabla u^{*})\|_{L^q(B^+_r(-r\lambda e_n))}\leq \|\wF_{\lambda}(\nabla u)\|_{L^q(\Omega)}\leq \|F_{\lambda}(\nabla v)\|_{L^q(B^+_r(-r\lambda e_n))},\]
which indicates that all of the above are considered as equalities. Then we complete the proof.
\ep

%Now, we can prove the Talenti comparison principle.
\begin{proof}[{\bf Proof of the Theorem \ref{theorem4.2}}]
Step 1.
A function  $u\in W_0^{1,2}(\Omega; E^c)$ is  a weak solution of \eqref{e4.2} if it satisfies  the variational identity
\be\label{e4.3}
\int_{\Omega}\wF_{\lambda}(\nabla u)D\wF_{\lambda}(\nabla u)\cdot\nabla\phi dx-\int_{\partial\Omega\cap E^c} \wF_{\lambda}(\nabla u)D\wF_{\lambda}(\nabla u)\cdot\nu\phi dx=\int_{\Omega}f\phi dx,
\ee
for any $\phi\in W_0^{1,2}(\Omega; E^c)$.
For $t>0$, let $\phi=(u-t)^+$ and substituting it into the weak formulation \eqref{e4.3}  yields
\[\int_{\{u>t\}}\wF_{\lambda}^2(\nabla u)dx=\int_{\{u>t\}}f(u-t)dx.\]
Noting that $\int_{\{u>t\}}\wF_{\lambda}^2(\nabla u)dx$ is a decreasing function of $t$, we differentiate with respect to $t$ to obtain
\be\label{e4.5}
0\leq -\frac{d}{dt}\int_{\{u>t\}}\wF_{\lambda}^2(\nabla u)dx=\int_{\{u>t\}}fdx.\ee
Applying the Cauchy-Schwartz inequality, we have
\[\left(\frac{1}{h}\int_{\{t<u\leq t+h\}}\wF_{\lambda}(\nabla u)dx\right)^2\leq\left(\frac{1}{h}\int_{\{t<u\leq t+h\}}\wF^2_{\lambda}(\nabla u)dx\right)\left(\frac{1}{h}\int_{\{t<u\leq t+h\}}dx\right).\]
Taking the limit as $h\rightarrow 0$ and using \eqref{e4.5}, we derive
\be\label{e4.6}
\left(-\frac{d}{dt}\int_{\{u>t\}}\wF_{\lambda}(\nabla u)dx\right)^2\leq-\mu'(t)\int_{\{u>t\}}fdx,
\ee
where $\mu(t)$ is the distribution function of $u$. Define $G(\xi)=\int_0^{\xi}f^{\#}(s)ds$,
where $f^{\#}(s)=\inf\{t:|f(x)>t|<s,\ s>0\}$  is the decreasing rearrangement of $f$. Since
\[0\leq\int_{\{u>t\}}fdx\leq \int_0^{\mu(t)}f^{\#}(s)ds=G(\mu(t)),\]
it follows that  $G(\mu(t))$ is non-negative for all $t>0$. Applying Theorem \ref{theorem4.5} with $p=1$, we obtain
\[
-\frac{d}{dt}\int_{\{u>t\}}\wF_{\lambda}(\nabla u)dx=\int_{\{u=t\}}\wF_{\lambda}\left(\frac{\nabla u}{|\nabla u|}\right)d\HR^{n-1}=P_{\wF_{\lambda}}(\{u>t\};E^c).
\]
Using the capillary isomertic inequality \eqref{e1.4}, we get
\[-\frac{d}{dt}\int_{\{u>t\}}\wF_{\lambda}(\nabla u)dx\geq n\kappa_{\lambda}^{\frac{1}{n}}\mu(t)^{1-\frac{1}{n}},\]
 where $\kappa_{\lambda}=|B^+(-\lambda e_n)|$. Combining these results, we arrive at
\[\left(n\kappa_{\lambda}^{\frac{1}{n}}\right)^2\mu(t)^{2-\frac{2}{n}}\leq-\mu'(t) G(\mu(t)).\]
Rewriting this inequality, we have
\be\label{e4.7}
1\leq\left(n\kappa_{\lambda}^{\frac{1}{n}}\right)^{-2}\mu(t)^{\frac{2}{n}-2}\left(-\mu'(t)\right) G(\mu(t)).
\ee
Integrating  from $t'$ to $t$ and making the change of variables $\xi=\mu(t)$, we obtain
\be\label{e4.8}
t-t'\leq\left( n\kappa_{\lambda}^{\frac{1}{n}}\right)^{-2}\int_{\mu(t)}^{\mu(t')}\xi^{\frac{2}{n}-2}G(\xi)d\xi.\ee
Setting $t'=0$ and $t=u^{\#}(s)-\eta$ for $\eta>0$ and $s\in(0,|\Omega|)$, we have $\mu(t')=|\Omega|$ and
$\mu(t)=|\{u>u^{\#}(s)-\eta\}|\geq s$. Since the integrand in \eqref{e4.8} is non-negative, it follows that
\[u^{\#}(s)-\eta\leq \left(n\kappa_{\lambda}^{\frac{1}{n}}\right)^{-2}\int_{s}^{|\Omega|}\xi^{\frac{2}{n}-2}G(\xi)d\xi.\]
By the arbitrariness of $\eta>0$, we conclude that
\be\label{e4.9}
u^{\#}(s)\leq \left(n\kappa_{\lambda}^{\frac{1}{n}}\right)^{-2}\int_{s}^{|\Omega|}\xi^{\frac{2}{n}-2}G(\xi)d\xi.
\ee

Step 2. We will show that the solution of equation \eqref{e4.4} is also radially symmetric. We say that a function $v\in W_0^{1,p}(B^+_r(-r\lambda e_n);\R^n_+)$ is a weak solution to \eqref{e4.4} if
\be\label{e5.14}
\int_{B^+_r(-r\lambda e_n)}F_{\lambda}(\nabla v)DF_{\lambda}(\nabla v)\cdot D\phi dx-\int_{\partial B_r(-r\lambda e_n)\cap\R^n_+}F_{\lambda}(\nabla v)DF_{\lambda}(\nabla v)\cdot\nu\phi d\HR^{n-1}=\int_{B^+_r(-r\lambda e_n)}f^{*}\phi dx,
\ee
for any $\phi\in W_0^{1,p}(B_r(-r\lambda e_n);\R^n_{+})$, and the corresponding energy function
\[J(v)=\frac{1}{2}\int_{B^+_r(-r\lambda e_n)}|F_{\lambda}(\nabla v)|^2 dx-\int_{B^+_r(-r\lambda e_n)}f^{*}v dx.\]
For any $w\in W_0^{1,p}(B_{r}(-r\lambda e_n);\R^n_{+})$ with $F_{\lambda}(\nabla w)DF_{\lambda}(\nabla w)\cdot\nu=0$ on $B_r(-r\lambda e_n)\cap\partial\R^n_+$, we choose the test function $\phi=v-w$ in \eqref{e5.14} and then obtain
\[\int_{B^+_r(-r\lambda e_n)}F_{\lambda}(\nabla v)DF_{\lambda}(\nabla v)\cdot D(v-w) dx=\int_{B^+_r(-r\lambda e_n)}f^{*}(v-w) dx.\]
Obviously, the solution $v$ of problem \eqref{e4.4} is a global minimizer of $J(u)$. Indeed,
\[\begin{split}
J(v)-J(w)=&\frac{1}{2}\int_{B^+_r(-r\lambda e_n)}|F_{\lambda}(\nabla v)|^2 dx-\frac{1}{2}\int_{B^+_r(-r\lambda e_n)}|F_{\lambda}(\nabla w)|^2 dx-\int_{B^+_r(-r\lambda e_n)}f^{*}(v-w) dx\\
=&\frac{1}{2}\int_{B^+_r(-r\lambda e_n)}|F_{\lambda}(\nabla v)|^2 dx-\frac{1}{2}\int_{B^+_r(-r\lambda e_n)}|F_{\lambda}(\nabla w)|^2 dx\\
&-\int_{B^+_r(-r\lambda e_n)}F_{\lambda}(\nabla v)DF_{\lambda}(\nabla v)\cdot D(v-w) dx\geq  0,
\end{split}\]
where we used the convexity of $F^2_{\lambda}$ in the last inequality. Moreover, the minimizer of $J(u)$ is unique. Assume by contradiction that there exists two different minimizers $v_1$ and $v_2$ of $J(u)$. Using again the convexity of $F^2_{\lambda}$, then we have
\[\begin{split}
J(\frac{v_1+v_2}{2})=&\frac{1}{2}\int_{B^+_r(-r\lambda e_n)}\left|F_{\lambda}\left(\frac{\nabla v_1}{2}+\frac{\nabla v_2}{2}\right)\right|^2 dx-\int_{B^+_r(-r\lambda e_n)}f^{*}\left(\frac{v_1+v_2}{2}\right) dx\\
<&\frac{1}{2}\left(\frac{1}{2}\int_{B^+_r(-r\lambda e_n)}|F_{\lambda}(\nabla v_1)|^2 dx+\frac{1}{2}\int_{B^+_r(-r\lambda e_n)}|F_{\lambda}(\nabla v_2)|^2 dx\right)dx\\
&-\frac{1}{2}\int_{B^+_r(-r\lambda e_n)}f^{*}v_1 dx-\frac{1}{2}\int_{B^+_r(-r\lambda e_n)}f^{*}v_2 dx\\
=&\frac{1}{2}J(u)+\frac{1}{2}J(v),
\end{split}\]
which is a contradiction with the fact that $J(\frac{v_1+v_2}{2})\geq J(v)$. In conclusion, we already established the uniqueness of solutions to the equation \eqref{e4.4}. In order to prove such solution is also radially symmetric, it suffices to construct an explicitly symmetric solution.

\medskip
We consider the following ODE equation:
\be\label{e5.15}
\begin{cases}
-\rho^{-(n-1)}\frac{d}{d\rho}(\rho^{n-1}v'(\rho))=f^{*},&\mbox{for any\ }\rho\in(0,r),\\
v'(0)=v(r)=0,&
\end{cases}
\ee
where $r\in\R$ is a real constant and $f^{*}$ is the capillary Schwartz symmetrization of $f$.
Integrating from $0$ to $\rho$ and using the boundary condition $v'(0)=0$, we deduce that
\be\label{e5.18}\begin{split}
v'(\rho)=&-\frac{1}{\rho^{n-1}}\int_0^{\rho} s^{n-1}f^{\#}(\kappa_{\lambda} s^n)ds\\
=&-\frac{1}{n\kappa_{\lambda}\rho^{n-1}}\int_0^{\kappa_{\lambda}\rho^n}f^{\#}(s)ds.
\end{split}\ee
Then, integrating from $\rho$ to $r$ and using the fact that $v(r)=0$, we obtain
\[v(\rho)=\int_{\rho}^r\frac{1}{n\kappa_{\lambda} t^{n-1}}\left(\int_0^{\kappa_{\lambda} t^{n}}f^{\#}(s)ds\right) dt.\]
Therefore, the solution of the problem \eqref{e5.15} is unique. Set $v(x)=v(\rho)$ with $\rho=F_{\lambda}^o(x)$.  Now, we show that $v(x)$ satisfies the equation \eqref{e4.4}. Let's denote $\frac{\partial v}{\partial x_i}$ by $v_{i}$ and $\frac{\partial F^o_{\lambda}(x)}{\partial x_i}$ by $D_i F^o_{\lambda}(x)$. It is easy to check that
\[v_{i}=v'(\rho)\frac{\partial \rho}{\partial x_i}=v'(\rho)D_iF_{\lambda}^o(x)\]
and
\[v_{ii}=v''(\rho)\left(D_iF_{\lambda}^o(x)\right)^2+v'(\rho)D_{ii}F_{\lambda}^o(x).\]
Therefore,
\[F_{\lambda}(\nabla v)=F_{\lambda}(v'(\rho)DF_{\lambda}^o(x))=v'(\rho)F_{\lambda}(D F_{\lambda}^o(x))=v'(\rho)\]
and
\[DF_{\lambda}(\nabla v)=D F_{\lambda}(v'(\rho)DF_{\lambda}^o(x)=DF_{\lambda}(DF_{\lambda}^o(x))=\frac{x}{F_{\lambda}^o(x)},\]
where we used the the property of $F_{\lambda}$ that $F_{\lambda}(DF_{\lambda}^o(x))=1$ and $F_{\lambda}^o(x)DF_{\lambda}(DF_{\lambda}^o(x))=x$.
Since
\[F_{\lambda}(\nabla v)DF_{\lambda}(\nabla v)=v'(\rho)\frac{x}{F_{\lambda}^o(x)}\]
and
\[\begin{split}
\frac{\partial}{\partial x_i}\left(v'(\rho)\frac{x_i}{F_{\lambda}^o(x)}\right)=&v''(\rho)D_iF_{\lambda}^o\frac{x_i}{F_{\lambda}^o(x)}+v'(\rho)\frac{F_{\lambda}^o(x)-x_i D_iF_{\lambda}^o(x)}{\left(F_{\lambda}^o(x)\right)^2},
\end{split}\]
then we deduce that
\[\begin{split}
\div\left(F_{\lambda}(\nabla v)DF_{\lambda}(\nabla v)\right)=&\sum_{i=1}^{N}\frac{\partial}{\partial x_i}\left(v'(\rho)\frac{x_i}{F_{\lambda}^o(x)}\right)\\
=&\frac{v''(\rho)}{F_{\lambda}^o(x)}DF_{\lambda}^o(x)\cdot x+\frac{nv'(\rho)}{F_{\lambda}^o(x)}-\frac{v'(\rho)}{\left(F_{\lambda}^o(x)\right)^2}DF_{\lambda}^o(x)\cdot x\\
=&v''(\rho)+\frac{(n-1)v'(\rho)}{F_{\lambda}^o(x)}\\
=&\frac{1}{\rho^{n-1}}\left[\rho^{n-1}v''(\rho)+(n-1)\rho^{n-2}v'(\rho)\right]\\
=&\frac{1}{\rho^{n-1}}\frac{d}{d\rho}(\rho^{n-1}v'(\rho)),
\end{split}\]
where we used the fact that $DF_{\lambda}^o(x)\cdot x=F_{\lambda}^o(x)$ in the third equation. From the preceding analysis, we obtain the identity
\[-\div\left(F_{\lambda}(\nabla v)DF_{\lambda}(\nabla v)\right)=f^{*}\quad\mbox{in\ }\WR_{\lambda,r}\]
where $\WR_{\lambda,r}:=\{x\in\R^n:F^o_{\lambda}(x)\leq r\}$ denotes the Wulff ball associated with the gauge function $F_{\lambda}$ which is coincide with the Euclidean ball $B_r(-r\lambda e_n)$.
Moreover, the boundary condition $v(r)=0$ yields
\[v(x)=0 \quad\mbox{for any\ } x\in\partial\WR_{\lambda,r}\cap\R^n_+.\]
It remains to verify the anisotropic Neumann boundary condition on $\WR_{\lambda,r}\cap\partial\R^n_+$. Observing that $Dv(x)=v'(\rho)DF_{\lambda}^o(x)$, then for any $x\in\WR_{\lambda,r}\cap\partial\R^n_+$, we derive
\[F_{\lambda}(\nabla v)DF_{\lambda}(\nabla v)\cdot\nu=v'(\rho)DF_{\lambda}(DF^o_{\lambda}(x))\cdot\nu=v'(\rho)\frac{x}{\rho}\cdot\nu=0,\]
where we used the the properties of gauge function in \eqref{e2.7}, and the last equality follows from the fact that the unit outer normal to $\partial\R^n_+$ is $\nu=e_n=(0,\cdots,1)$. We now establish the uniqueness and radial symmetry of solutions to equation \eqref{e4.4}, and then, we can use the explicit expression of $v$ to derive the conclusion. Indeed, we only need to make the variable substitution, and let $\xi=\kappa_{\lambda}t^n$, then we get
\[\begin{split}
v(\rho)=&\int_{\rho}^r\frac{1}{n\kappa_{\lambda} t^{n-1}}\left(\int_0^{\kappa_{\lambda} t^{n}}f^{\#}(s)ds\right) dt\\
=&\left(n\kappa_{\lambda}^{\frac{1}{n}}\right)^{-2}\int_{\kappa_{\lambda}\rho^N}^{\kappa_{\lambda}r^n}\xi^{\frac{2}{n}-2}\left(\int_0^{\kappa_{\lambda}\rho^n}f^{\#}(s)ds\right)d\xi.
\end{split}\]
Therefore, we can write
\be\label{e4.10}v^{\#}(s)=\left(n\kappa_{\lambda}^{\frac{1}{n}}\right)^{-2}\int_{s}^{|\Omega|}\xi^{\frac{2}{n}-2}G(\xi)d\xi.\ee
In view of \eqref{e4.9} and \eqref{e4.10}. it follows that $u^{\#}(s)\leq v^{\#}(s)$ and the proof is completed.

\medskip
Step 3. Next, we analyze the case of equality. Let $\mu$ and $\nu$ denote the distribution functions of $u$ and $v$ respectively.
 Going back to the proof of Theorem \ref{theorem4.2}, the inequalities holds as equalities for $v$ and $\nu$. If $u^{*}=v$, then $\mu=\nu$, thus
 \[1=\left(n\kappa_{\lambda}^{\frac{1}{n}}\right)^{-2}\mu(t)^{\frac{2}{n}-2}\left(-\mu'(t)\right) G(\mu(t)).\]
 This implies that we have the following equality:
\[-\frac{d}{dt}\int_{\{u>t\}}\wF_{\lambda}(\nabla u)dx=n\kappa_{\lambda}^{\frac{1}{n}}\mu(t)^{1-\frac{1}{n}}.\]
We see that the set $\{u>t\}$ achieves equality in the relative isopermetric inequality outside convex sets, thus, for any given $t>0$, the set $\{u>t\}$ is isometric to $B^+_{r_t}(-r_t\lambda e_n)$ with $|B^+_{r_t}(-r_t\lambda e_n)|=|\{u>t\}|$. Now, $\Omega=\{u>0\}$ is the increasing union of such $\{u>t\}$ for any $t>0$. Moreover, if we choose a sequence $\{t_m\}$ decreasing to zero such that each set $\{u>t_m\}$ is a ball of radius $r_m$ and centre at $-r_m\lambda e_n$ in $\R^n_+$, then $r_m$ increases to $r$ and $|\Omega|=\lim_{m\rightarrow\infty}\kappa_{\lambda} r^n_m=\kappa_{\lambda} r^n$. If $y\in\Omega$, then $y\in\{u> t_m\}$ for $m$ sufficiently large, and so $\Omega$ is isometric to $B^+_r(-r\lambda e_n)$.

\end{proof}
%it is well known that the unit normal vector of the Wulff ball is $\nu=\frac{DF_{\lambda}^o(x)}{|DF_{\lambda}^o(x)|}$, we then have
%\[F_{\lambda}(\nabla u)DF_{\lambda}(\nabla u)\cdot\nu=F_{\lambda}(\nabla u)\frac{x}{F_{\lambda}^o(x)}\cdot \frac{DF_{\lambda}^o(x)}{|DF_{\lambda}^o(x)|}=\frac{F_{\lambda}(\nabla u)}{|DF_{\lambda}^o(x)|}=\frac{v'(\rho)}{|DF_{\lambda}^o(x)|}.\]
%It follows that $F_{\lambda}(\nabla u)DF_{\lambda}(\nabla u)\cdot\nu=0$ on $B_r(-r\lambda e_n)\cap \partial\R^n_+$ if and only if $v'(\rho)=0$.

%\section{The Bossel-Daners Inequality}

Based on the above results, we obtain the capillary Bossel-Daners inequality outside convex domain.
\medskip
\begin{proof}[{\bf Proof of Theorem \ref{corollary4.1}}]
Let $u$ be the first  eigenfunction of the capillary anisotropic Laplace operator with respect to $\widetilde{\lambda}_1(\Omega;E^c)$, then $u$ satisfies
\be\label{e5.22}\begin{cases}
-\div\left(\wF_{\lambda}(\nabla u)D\wF_{\lambda}(\nabla u)\right)=\widetilde{\lambda}_1(\Omega;E^c)u&\mbox{in\ }\Omega\\
u=0&\mbox{on\ }\partial\Omega\cap E^c\\
\wF_{\lambda}(\nabla u)D\wF_{\lambda}(\nabla u)\cdot\nu=0&\mbox{on\ }\partial\Omega\cap\partial E^c.
\end{cases}\ee
We assume that $v$ solves
\be\label{e5.21}\begin{cases}
-\div\left(F_{\lambda}(\nabla v)D F_{\lambda}(\nabla v)\right)=\widetilde{\lambda}_1(\Omega;E^c) u^{*}&\mbox{in\ }B^+_r(-r\lambda e_n)\\
v=0&\mbox{on\ }\partial B_r(-r\lambda e_n)\cap \R_+^n\\
 F_{\lambda}(\nabla v)D F_{\lambda}(\nabla v)\cdot\nu=0&\mbox{on\ } B_r(-r\lambda e_n)\cap\partial \R_+^n,
\end{cases}\ee
where $u^{*}$ is the capillary Schwartz symmetrization of $u$.
Applying the inequality \eqref{e5.16}, we have
\[\int_{\Omega}u^2 dx=\int_{B^+_r(-r\lambda e_n)}(u^{*})^2dx\leq\int_{B^+_r(-r\lambda e_n)}v^2dx.\]
Moreover, by H\"{o}lder inequality, we get
\[\int_{B^+_r(-r\lambda e_n)}u^{*}vdx\leq\left(\int_{B^+_r(-r\lambda e_n)}(u^{*})^2dx\right)^{\frac{1}{2}}\left(\int_{B^+_r(-r\lambda e_n)}v^2dx\right)^{\frac{1}{2}}\leq \int_{B^+_r(-r\lambda e_n)}v^2dx.\]
Multiplying by $v$ on both sides of \eqref{e5.21} and integrating in $B^+_r(-r\lambda e_n)$, we obtain
\[\begin{split}
\widetilde{\lambda}_1(\Omega;E^c)=&\frac{\int_{B^+_r(-r\lambda e_n)}|F_{\lambda}(\nabla v)|^2 dx}{\int_{B^+_r(-r\lambda e_n)}u^{*}v dx}\\
\geq&\frac{\int_{B^+_r(-r\lambda e_n)}|F_{\lambda}(\nabla v)|^2 dx}{\int_{B^+_r(-r\lambda e_n)}v^2 dx}\\
\geq& \widetilde{\lambda}_1(B_r(-r\lambda e_n);\R^n_+),
\end{split}\]
where we used the definition of $\widetilde{\lambda}_1(B_{r}(-r\lambda e_n);\R^n_+)$ in the last inequality.

\medskip
Now, we consider the case of equality that $\widetilde{\lambda}_1(\Omega;E^c)=\widetilde{\lambda}_1(B_{r}(-r\lambda e_n);\R^n_+)$.
Let $u$ and $v$ be the solutions of \eqref{e5.22} and \eqref{e5.21} respectively.  Applying the capillary Talenti's comparison principle outside convex domain (cf. Theorem \ref{theorem4.2}), we obtain \[u^{*}\leq v\quad\mbox{in\ }B^+_r(-r\lambda e_n).\] This yields the inequality
\[-\div\left(F_{\lambda}(\nabla v)DF_{\lambda}(\nabla v)\right)\leq \widetilde{\lambda}_1v.\]
Multiplying this inequality by $v$ and integrating over $B^+_{r}(-r\lambda e_n)$, we derive
\[\int_{B^+_r(-r\lambda e_n)}|F_{\lambda}(\nabla v)|^2 dx\leq\widetilde{\lambda}_1\int_{B^+_r(-r\lambda e_n)}v^2 dx.\]
Since $\widetilde{\lambda}_1$ is also the first  capillary  eigenvalue of the anisotropic operator on $B^+_r(-r\lambda e_n)$, then we get
\[ \widetilde{\lambda}_1=\frac{\int_{B^+_r(-r\lambda e_n)}|F_{\lambda}(\nabla v)|^2 dx}{\int_{B^+_r(-r\lambda e_n)}v^2 dx}.\]
In other words, $\widetilde{\lambda}_1$ is achieved by $v$ and so $v$ is an eigenfunction of the capillary anisotropic operator, i.e., $-\div\left(F_{\lambda}(\nabla v)DF_{\lambda}(\nabla v)\right)=\widetilde{\lambda}_1v$. Thus, $v=u^{*}$ and so we are in the equality case of capillary Talenti's comparison principle outside convex doamin. Therefore, by Theorem \ref{theorem4.2}, it follows that $\Omega$ is isometric to $B^+_r(-r\lambda e_n)$.

\end{proof}

\end{document}